\documentclass[pdflatex,sn-mathphys-num]{sn-jnl}

\usepackage{tikz}
\usepackage{pgfplots}
\pgfplotsset{compat=newest}
\usepackage{float}
\usepackage{url}
\usepackage{placeins}

\usepackage{graphicx}%
\usepackage{multirow}%
\usepackage{amsmath,amssymb,amsfonts}%
\usepackage{amsthm}%
\usepackage{mathrsfs}%
\usepackage[title]{appendix}%
\usepackage{xcolor}%
\usepackage{textcomp}%
\usepackage{manyfoot}%
\usepackage{booktabs}%
\usepackage{algorithm}%
\usepackage{algorithmicx}%
\usepackage{algpseudocode}%
\usepackage{listings}%

\usepackage{geometry}
\usepackage{fancyhdr}
\fancypagestyle{arxivstyle}{
    \fancyhf{}

    \fancyhead[L]{\small Positive-Allocation Companion Predictions}
    \fancyhead[R]{\small Flores and Pascoe}

    \fancyfoot[C]{\thepage}

}

\theoremstyle{thmstyleone}%
\newtheorem{theorem}{Theorem}
\newtheorem{proposition}[theorem]{Proposition}%

\theoremstyle{thmstyletwo}%
\newtheorem{remark}{Remark}%

\theoremstyle{thmstylethree}%

\begin{document}

\title[Positive-Allocation Companion Predictions]{Positive-Allocation Companion Predictors for Nonlinear Dynamics and Their Finite-Difference Diagnostics}

\author*[1]{\fnm{Cynthia} \sur{Flores}}\email{cynthia.flores@csuci.edu}

\author[2]{\fnm{James} \sur{Pascoe}}\email{jep362@drexel.edu}

\affil*[1]{\orgdiv{Department of Mathematics, Data Science, and Statistics}, \orgname{California State University Channel Islands}, \orgaddress{\street{One University Drive}, \city{Camarillo}, \postcode{93012}, \state{CA}, \country{USA}}}

\affil[2]{\orgdiv{Department of Mathematics}, \orgname{Drexel University}, \orgaddress{\street{3141 Chestnut Street}, \city{Philadelphia}, \postcode{19104}, \state{PA}, \country{USA}}}

\abstract{We introduce a positive-allocation companion construction for Koopman-inspired finite-dimensional prediction of nonlinear dynamical systems. The method determines recurrence coefficients by representing a target observable snapshot as a nonnegative, normalized combination of earlier training snapshots. These coefficients define a companion matrix whose spectral structure is induced by the allocation constraints at the construction stage. We prove that normalized positive allocation places the companion spectrum in the closed unit disk and, because the coefficients sum to one, includes $1$ as an eigenvalue. Additionally, we develop modal and non-modal diagnostics for the resulting model trajectory. When the companion matrix is diagonalizable, the modal representation shows that first and second finite differences act as spectral filters through factors of $\lambda_\ell-1$ and $(\lambda_\ell-1)^2$. We also derive $C$-based finite-difference bounds that avoid diagonalization and can be evaluated directly from the training data and companion matrix. Numerical experiments on the FitzHugh--Nagumo and susceptible--infectious--recovered (SIR) models illustrate the behavior of the construction in oscillatory and transient dissipative settings. The examples demonstrate both the interpretability of the companion recurrence and its limitations, particularly when pointwise trajectory agreement degrades while finite-difference and modal diagnostics remain informative.
}

\keywords{positive allocation, companion predictors, Koopman operator, nonlinear dynamical systems, data-driven prediction, spectral analysis, finite-difference diagnostics}

\pacs[MSC Classification]{37M10, 37N40, 65P99, 47A10}

\maketitle
\pagestyle{arxivstyle}

\section{Introduction}\label{sec:Intro}

Forecasting nonlinear dynamical systems is central to applied science and engineering. When the governing equations are unknown or too complex for direct use, data-driven methods provide alternative approaches to prediction and model discovery. These include sparse identification of nonlinear dynamics (SINDy) \cite{BrPrKu2016, ChLuKuBr2019}, neural ordinary differential equations, reservoir computing, and learned Koopman embeddings \cite{LuKuBr2018, ChRuBeDu2018, PaHuGiLuOt2018}. Such methods offer different balances among expressivity, interpretability, and computational tractability, but their performance may depend strongly on the quality and quantity of available data, model architecture or observable selection, numerical conditioning, and forecasting horizon \cite{Ljung1999, PaHuGiLuOt2018, Ru2019, HeRoClDeCa2017, MaWe2024, NuPePhScWo2023}.

Recently, there has been intense interest in Koopman operator theory as a way to represent nonlinear dynamics through a linear operator acting on observables. It is typical that the space of observables is infinite-dimensional, and therefore the Koopman operator $\mathcal{K}$ is itself also generally infinite-dimensional, even when the underlying dynamical system is finite-dimensional. Moreover, $\mathcal K$ is a linear operator that acts on observables, which are possibly nonlinear functions of the system state, rather than on the state itself. The operator $\mathcal{K}$ evolves the observables forward in time under the dynamics, allowing nonlinear systems to be studied using the tools of linear operator theory. Thus, the Koopman framework offers a global linearization of nonlinear dynamics, with the tradeoff that this is happening in an infinite-dimensional function space. 

In practical applications, finite-dimensional approximations of $\mathcal{K}$ are necessary. Spectral representations have proved useful for prediction, control, and system analysis in areas including fluid dynamics, power systems, and robotics \cite{BrBuKaKu2022, RoMeBaScHe2009, Me2013, CoDrHo2025, NEURIPS2022_KosticEtAl}; an overview of learning and control using Koopman models is provided in \cite{VeSoHi2021}. Common computational approaches include Extended Dynamic Mode Decomposition (EDMD) \cite{TuRoLuBrKu2014, WiKero2015}, delay-coordinate and Hankel-DMD \cite{ HaMe2017}, and kernel-based approximations \cite{WiRoKe2015}. Koopman methods have also been applied beyond classical prediction and control settings, including anomaly detection through occupation-kernel methods for quadcopter data~\cite{MoRuLiKa2023} and learned architectures such as KoopAGRU for time-series anomaly detection  \cite{AiBe2025}. These developments build on the broader perspective of applied Koopmanism \cite{BuMoMe2012}, which emphasizes spectral representations as a unifying framework for the analysis of nonlinear dynamics from data.

Recent advances in spectral and modal decomposition have further refined our understanding of Koopman operator approximations and their use in dynamical system prediction. Notably, Korda and Mezi\'{c} introduced operator-theoretic predictors grounded in convex optimization \cite{MiMe2020}, while Arbabi and Mezi\'{c} showed how dynamic mode decomposition and ergodic-theoretic ideas can be used to compute spectral properties of the Koopman operator, including through delay coordinates \cite{HaMe2017}. The work presented by Klus et al. in \cite{KlNuPeNiClSc2020} provides a unified framework for data-driven computation of Koopman generators and model reduction, while the volume \cite{MaMeSu2020} provides an overview of key concepts, computational techniques, and control applications grounded in spectral theory. These developments demonstrate the central role of spectral structure in the construction and interpretation of finite-dimensional models of nonlinear dynamics \cite{HaMe2017, MiMe2020, KlNuPeNiClSc2020, MaMeSu2020}. Spectral representations have also supported interpretable prediction and control in application areas such as health analytics and robotics \cite{NEURIPS2022_KosticEtAl, MaCaTaMu2021}. These works illustrate how modes, eigenfunctions, or learned coordinates can provide structured representations of nonlinear dynamics.

At the same time, rigorous Koopman spectral approximation requires care. Finite-dimensional spectra obtained from DMD- or EDMD-type matrices should not be identified uncritically with the spectrum of the underlying infinite-dimensional Koopman operator, since such approximations may exhibit spectral pollution or fail to represent the true operator spectrum. Residual dynamic mode decomposition and related approaches address these issues through residual-based diagnostics and verified computation of Koopman spectra, pseudospectra, and spectral measures, with convergence guarantees and control of spectral pollution \cite{colbrook2024rigorous, colbrook2023residual}.

The present work is complementary to this line of analysis. Rather than claiming a convergent approximation of the Koopman spectrum, we study a positive-allocation companion predictor whose finite-dimensional spectrum is induced by coefficient constraints at the construction stage. We then analyze the internal modal and finite-difference structure of the resulting model trajectory.

Here, positive allocation refers to the construction of recurrence coefficients subject to nonnegativity and normalization constraints. Rather than first fitting a finite-dimensional linear predictor and subsequently examining its spectrum, the proposed approach determines recurrence coefficients from a constrained least-squares problem and encodes them in a companion matrix. The coefficient constraints provide qualitative spectral information at the construction stage, while kernel-regularized least squares provides a flexible way to compute the allocation in an implicit feature space. The novelty of the approach lies in using coefficient constraints to shape the finite-dimensional spectral structure of the companion predictor and then analyzing the internal modal and finite-difference behavior of the resulting model trajectory. A complete discussion of the construction is given in Section~\ref{sect:pa}.

Having constructed the predictor, we examine analytical consequences of its finite-dimensional spectral representation. In particular, we consider two families of bounds:

\begin{enumerate}
\item modal bounds, obtained from a diagonalization of the companion matrix $C$, when such a diagonalization is available, and expressed in terms of the resulting eigenvalues and modes; and

\item $C$-based (non-modal) bounds, which avoid diagonalization and instead bound the difference between successive powers of $C$ directly.
\end{enumerate}

These two perspectives describe properties of the constructed model trajectory and clarify how the induced companion spectrum influences finite-difference behavior. We also discuss numerical conditioning and illustrate the resulting predictor on nonlinear dynamical systems. The numerical examples are intended to demonstrate the construction and to examine the behavior induced by the positive-allocation recurrence.

Throughout the paper, $\|\cdot\|_2$ denotes the Euclidean norm on finite-dimensional vector spaces. When applied to a matrix, $\|\cdot\|_2$ denotes the induced matrix norm,
$$
\|A\|_2=\sup_{\|v\|_2=1}\|Av\|_2.
$$ The paper is organized as follows. Section~\ref{sec:BP} introduces the Koopman setting, the positive-allocation construction, and the modal representation of the companion predictor. Section~\ref{sec:bounds} develops modal and non-modal finite-difference bounds, with $C$-based bounds in Section~\ref{sect:nonmodal}. Numerical results are presented in Section~\ref{sect:numerical}, and conclusions follow in Section~\ref{sec:conclusions}.

\section{Background and Preliminaries}
\label{sec:BP}

Let $\Omega\subseteq\mathbb{R}^d$ denote the state space, and let $F:\Omega\to\Omega$ be a discrete-time state map. Given an initial state $x_0\in\Omega$, the dynamics generate the forward orbit
$$
x_{j+1}=F(x_j),\qquad j\geq 0.
$$
Thus, $x_j$ denotes the state of the system at the $j$-th time step. The map $F$ may be nonlinear and need not be known explicitly in the data-driven setting considered here.

The Koopman operator framework provides a linear perspective on these possibly nonlinear dynamics. Rather than evolving the state directly, the Koopman operator acts on scalar-valued observables $g:\Omega\to\mathbb{C}$. We denote the Koopman operator by $\mathcal{K}$, defined by composition as
\begin{equation}
    (\mathcal{K}g)(x)=g(F(x)).
    \label{eq:calK}
\end{equation}
Although $F$ may be nonlinear, $\mathcal{K}$ is linear. Because the space of observables is generally infinite-dimensional, $\mathcal{K}$ is itself typically infinite-dimensional, even when $\Omega\subseteq\mathbb{R}^d$ is finite-dimensional. Practical data-driven methods therefore work with finite collections of observables and corresponding finite-dimensional representations; see \cite{Me2005, Me2013, TuRoLuBrKu2014, BuMoMe2012}.

To distinguish a finite collection of observables from the scalar observable $g$ appearing in \eqref{eq:calK}, let
$$
\mathbf{g}(x)
=
\begin{pmatrix}
g_1(x)\\
\vdots\\
g_m(x)
\end{pmatrix}
\in\mathbb{R}^m
$$ denote a selected vector of observables. For each state $x_j$, define the corresponding observable snapshot by
$$
y_j:=\mathbf{g}(x_j).
$$
The observable data matrix is then
$$
Y=[y_0,y_1,\ldots,y_n]\in\mathbb{R}^{m\times(n+1)}.
$$
Thus, each column of $Y$ is associated with an underlying state $x_j$. When the coordinate functions of the state are included among the selected observables, the physical state can be recovered from the corresponding components of $y_j$. In particular, when $m=d$ and $\mathbf{g}$ is the identity map, the observable snapshots coincide with the original state snapshots.

A common objective in data-driven Koopman methods is to identify a finite-dimensional matrix $\widehat K$ such that
$$
y_{j+1}\approx\widehat K y_j.
$$ Extended Dynamic Mode Decomposition (EDMD) constructs such a representation using a finite dictionary of observables \cite{WiKero2015}. Kernel-based variants, including kernel DMD, use a positive-definite kernel to evaluate inner products in an implicit feature space without explicitly constructing the associated feature map \cite{WiRoKe2015}. These approaches provide finite-dimensional representations of the action of the Koopman operator on a selected observable space.

In contrast to these operator-fitting approaches, the construction developed here first identifies a linear recurrence among the observed snapshots and then encodes that recurrence in a companion matrix. The recurrence coefficients are determined through positive allocation. In the kernelized version of the method, the kernel enters the coefficient selection problem rather than defining an EDMD-type operator approximation.

Let
$$
Y_{\mathrm{train}}
=
[y_0,y_1,\ldots,y_{n-1}]
\in\mathbb{R}^{m\times n},
$$ and let
$$
\alpha
=
[\alpha_0,\alpha_1,\ldots,\alpha_{n-1}]^T
\in\mathbb{R}^n.
$$ The coefficient vector $\alpha$ is chosen so that the final observed snapshot $y_n$ is represented, up to a residual, by
\begin{equation}
y_n
=
Y_{\mathrm{train}}\alpha+r(\alpha)
=
\sum_{j=0}^{n-1}\alpha_jy_j+r(\alpha).
\label{eq:allocation_residual}
\end{equation}

The coefficients determine the polynomial
\begin{equation}
p(\lambda)
=
\lambda^n
-
\alpha_{n-1}\lambda^{n-1}
-\cdots
-\alpha_1\lambda
-\alpha_0
\label{eq:poly}
\end{equation}
and the associated companion matrix
\begin{equation}
C=
\begin{pmatrix}
0 & 0 & \cdots & 0 & \alpha_0 \\
1 & 0 & \cdots & 0 & \alpha_1 \\
0 & 1 & \cdots & 0 & \alpha_2 \\
\vdots & \vdots & \ddots & \vdots & \vdots \\
0 & 0 & \cdots & 1 & \alpha_{n-1}
\end{pmatrix}.
\label{eq:matrix_rep}
\end{equation}

Let $e_1=(1,0,\ldots,0)^T\in\mathbb{R}^n$. The corresponding model trajectory is defined by
\begin{equation}
\widehat y_k
=
Y_{\mathrm{train}}C^ke_1,
\qquad k\geq 0.
\label{eq:Cpred}
\end{equation} Here, $\widehat y_k$ denotes the snapshot generated by the companion recurrence at prediction index $k$. The companion matrix $C$ acts on the coefficient coordinates in $\mathbb R^n$, while multiplication by $Y_{\mathrm{train}}$ maps these coordinates back to observable space. Thus, the construction should be distinguished from EDMD- or DMD-type operator-fitting approaches, in which a finite-dimensional matrix is used to advance observable snapshots directly.

Due to the shift structure of $C$,
$$
\widehat y_k=y_k,
\qquad
k=0,\ldots,n-1.
$$ For $k\geq n$, the model trajectory is generated by the recurrence encoded in the final column of $C$. The construction is data-driven and does not require explicit knowledge of the underlying state map $F$.

Section \ref{sect:pa} below specifies how the coefficient vector $\alpha$ is determined and describes the spectral consequences of
the nonnegativity and normalization constraints.

\subsection{Positive Allocation}\label{sect:pa} 

Recall that 
$$
Y_{\mathrm{train}}
=
[y_0,y_1,\ldots,y_{n-1}]
\in \mathbb{R}^{m\times n}
$$ denotes the matrix of observed training snapshots, and let $y_n\in\mathbb{R}^m$ denote the final observed snapshot. The \emph{positive-allocation problem} is the constrained least-squares problem
$$
\min_{\alpha\in\mathbb{R}^n}
\left\|
y_n-Y_{\mathrm{train}}\alpha
\right\|_2^2
$$ subject to
$$
\alpha_j\geq 0,
\qquad
j=0,\ldots,n-1,
$$ and
$$
\sum_{j=0}^{n-1}\alpha_j=1.
$$ We refer to any minimizer
$$
\alpha^*
=
[\alpha_0^*,\alpha_1^*,\ldots,\alpha_{n-1}^*]^T
$$ as a positive allocation of the final snapshot with respect to the training snapshots. Here, ``positive'' is used in the sense of nonnegative coefficients; some entries of $\alpha^*$ may be zero. The normalization constraint implies that
$$
Y_{\mathrm{train}}\alpha^*
=
\sum_{j=0}^{n-1}\alpha_j^*y_j
$$ is a convex combination of the observed training snapshots.

The allocation coefficients determine the recurrence encoded by the companion matrix $C$ in \eqref{eq:matrix_rep}, as well as the characteristic polynomial $p(\lambda)$ in \eqref{eq:poly}. Thus, the spectrum of the companion predictor is induced by the coefficient constraints at the construction stage. Proposition \ref{prop:positiveallocation} describes the resulting spectral containment, while Algorithm \ref{alg:PA} summarizes the numerical construction.

Although sparsity is not imposed explicitly in the optimization problem, positive-allocation solutions may contain many zero or near-zero coefficients. Related sparse behavior has been observed in positive allocation problems arising in modern portfolio theory \cite{best1992positively,hutinet2024indices}. In the present setting, sparsity provides an intuitive interpretation of the recurrence: only a selected collection of earlier snapshots contributes directly to the first predicted snapshot.

Before examining the general spectral consequences of positive allocation, we illustrate this recursive interpretation through a hypothetical sparse allocation. Suppose that the training window consists of the observable
snapshots
$$
y_0,y_1,\ldots,y_{255},
$$ and that the final observed snapshot is $y_{256}$. Thus, the companion matrix has dimension $n=256$.

Consider the hypothetical sparse allocation
$$
\alpha_{128}=\frac{1}{2},
\qquad
\alpha_{192}=\frac{1}{3},
\qquad
\alpha_{5}=\frac{1}{6},
$$ with $\alpha_j=0$ for all remaining indices. These coefficients are nonnegative and satisfy
$$
\frac{1}{2}+\frac{1}{3}+\frac{1}{6}=1.
$$ The first snapshot generated by the recurrence is therefore
\begin{equation}
\widehat y_{256}
=
\frac{1}{2}y_{128}
+
\frac{1}{3}y_{192}
+
\frac{1}{6}y_{5}.
\label{eq:hypothetical}
\end{equation} When the allocation residual is small, $\widehat y_{256}$ provides an approximation of the observed snapshot $y_{256}$.

More generally, for $\gamma\in\mathbb{Z}_{+}$ the recurrence encoded by the companion matrix gives
$$
\widehat y_{256+\gamma}
=
\frac{1}{2}\widehat y_{128+\gamma}
+
\frac{1}{3}\widehat y_{192+\gamma}
+
\frac{1}{6}\widehat y_{5+\gamma}.
$$
For example, taking $\gamma=64$ yields
\begin{align}
\widehat y_{320}
&=
\frac{1}{2}\widehat y_{192}
+
\frac{1}{3}\widehat y_{256}
+
\frac{1}{6}\widehat y_{69}
\nonumber\\
&=
\frac{1}{2}y_{192}
+
\frac{1}{3}
\left(
\frac{1}{2}y_{128}
+
\frac{1}{3}y_{192}
+
\frac{1}{6}y_{5}
\right)
+
\frac{1}{6}y_{69}.
\label{eq:recursion}
\end{align}
Here, $\widehat y_{192}=y_{192}$ and $\widehat y_{69}=y_{69}$ because these indices remain within the training window.

\begin{figure}[htbp]
\centering
\begin{tikzpicture}[scale=0.5, thick]

  \draw[->] (0,0) -- (18,0) node[right] {\small Prediction index};
  \draw[->] (0,0) -- (0,12) node[above] {\small Contributing snapshot index};

  \draw (-0.2,2) -- (0.2,2) node[left=5pt] {\small 5};
  \draw (-0.2,6) -- (0.2,6) node[left=5pt] {\small 128};
  \draw (-0.2,9) -- (0.2,9) node[left=5pt] {\small 192};
  \draw (-0.2,11.5) -- (0.2,11.5) node[left=5pt] {\small 256 max};

  \draw (8,-0.2) -- (8,0.2) node[below=5pt] {\small 256};
  \draw (13,-0.2) -- (13,0.2) node[below=5pt] {\small $256 + \gamma$};

  \draw[dashed] (13,0) -- (13,12);

  \draw[black] (0,0) -- (8,11.5);
  \filldraw[black] (0,0) circle (2.5pt);
  \draw[black, thick, fill=white] (8,11.5) circle (2.5pt);

\foreach \y [count=\i] in {1, 5.5, 9} {
  \pgfmathsetmacro{\xend}{8 + (8/11.5)*(11.5 - \y)}

  \ifnum\i=1 \def\dotcolor{cyan}\fi
  \ifnum\i=2 \def\dotcolor{magenta}\fi
  \ifnum\i=3 \def\dotcolor{blue}\fi

  \draw[black] (8,\y) -- (\xend,11.5);
  \filldraw[black] (8,\y) circle (2.5pt);               
}

\foreach \y [count=\i] in {1, 5.5, 9} {
  \pgfmathsetmacro{\xorig}{(8/11.5)*(11.5 + 11.5 - 9}

  \pgfmathsetmacro{\xend}{\xorig + (8/11.5)*(11.5 - \y)}
  \pgfmathsetmacro{\ystart}{\y}
  \pgfmathsetmacro{\yend}{11.5}

  \draw[blue, thick] (\xorig,\ystart) -- (\xend,\yend);
  \filldraw[blue] (\xorig,\ystart) circle (2.5pt);
   \draw[blue, thin, dashed] (\xorig,0) -- (\xorig,12);
}

\foreach \y [count=\i] in {1, 5.5, 9} {

  \pgfmathsetmacro{\xorigi}{(9.42/11.5)*(11.5 + 11.5 - 9}

  \pgfmathsetmacro{\xend}{\xorigi + (8/11.5)*(11.5 - \y)}
  \pgfmathsetmacro{\ystart}{\y}
  \pgfmathsetmacro{\yend}{11.5}

  \draw[magenta, thick] (\xorigi,\ystart) -- (\xend,\yend);
  \filldraw[magenta] (\xorigi,\ystart) circle (2.5pt);
   \draw[magenta, thin, dashed] (\xorigi,0) -- (\xorigi,12);
}

\foreach \y [count=\i] in {1, 5.5, 9} {
  \pgfmathsetmacro{\xorigi}{(8.01/11.5)*(11.5 + 11.5 - 5.5}

  \pgfmathsetmacro{\xend}{\xorigi + (8/11.5)*(11.5 - \y)}
  \pgfmathsetmacro{\ystart}{\y}
  \pgfmathsetmacro{\yend}{11.5}

  \draw[cyan, thick] (\xorigi,\ystart) -- (\xend,\yend);
  \filldraw[cyan] (\xorigi,\ystart) circle (2.5pt);
   \draw[cyan, thin, dashed] (\xorigi,0) -- (\xorigi,12);
}

\end{tikzpicture}
\caption{Propagation, or ``rebirth,'' of selected snapshot contributions at prediction index $256+\gamma$.}
\label{fig:reborn}
\end{figure}

This example shows how the influence of a sparse collection of earlier snapshots propagates through repeated applications of the recurrence. The earlier snapshots are not reproduced literally; rather, their weighted contributions reappear within later model states. In this sense, information from selected portions of the observed trajectory is ``reborn'' as the prediction evolves. Figure~\ref{fig:reborn} illustrates this propagation of snapshot contributions. We now consider the corresponding spectral consequence of the nonnegativity and normalization constraints. In particular, these constraints ensure that the roots of the associated characteristic polynomial lie in the closed unit disk.

\begin{proposition}[Unit-disk containment for positive allocation]
\label{prop:positiveallocation}
Given a polynomial 
\begin{equation}
p(z)=z^n -a_{n-1}z^{n-1} -a_{n-2}z^{n-2}-\hdots - a_1 z - a_0
\label{eq:charpoly}
\end{equation} such that, 
\begin{itemize}
\item All coefficients $a_j\geq 0$ for $j=0,\hdots,n-1$, 
\item The sum of coefficients $\sum_{j=0}^{n-1}a_j =1,$
\end{itemize} then all roots of $p(z)$ lie in the closed unit disk $|z|\leq 1$.
\end{proposition}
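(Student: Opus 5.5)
I will argue by contradiction, using the triangle inequality on the defining equation of a root. Suppose $z$ is a root of $p$ with $|z|>1$. Then $p(z)=0$ can be rewritten as
\begin{equation*}
z^n=\sum_{j=0}^{n-1}a_jz^j .
\end{equation*}
Taking absolute values, and using $a_j\geq 0$ so that $|a_j z^j|=a_j|z|^j$, I obtain
\begin{equation*}
|z|^n\leq \sum_{j=0}^{n-1}a_j|z|^j .
\end{equation*}

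The key step is to compare each power $|z|^j$ with $|z|^n$. Since $|z|>1$ and $j\leq n-1$, we have $|z|^j<|z|^n$ for every $j$. Hence
\begin{equation*}
\sum_{j=0}^{n-1}a_j|z|^j\leq |z|^{n-1}\sum_{j=0}^{n-1}a_j=|z|^{n-1}<|z|^n,
\end{equation*}
where the middle equality is the normalization $\sum_j a_j=1$. The first inequality holds because each $a_j\geq 0$ and $|z|^j\leq |z|^{n-1}$. The strict inequality uses $|z|>1$. Combining this with the previous display gives $|z|^n<|z|^n$, a contradiction. Therefore every root satisfies $|z|\leq 1$.

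An equivalent formulation, which I may present instead, divides by $z^n$ with $w=1/z$, $|w|<1$. The root condition becomes $1=\sum_j a_j w^{n-j}$. The right-hand side has modulus at most $\sum_j a_j|w|^{n-j}\leq |w|<1$, which is impossible. I will also note in passing that $p(1)=1-\sum_j a_j=0$, so $1$ is indeed a root, consistent with the abstract's claim that $1\in\sigma(C)$; this is not needed for containment itself.

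There is no genuine obstacle here. The only points needing care are the two uses of the hypotheses. Nonnegativity turns $|\sum a_j z^j|$ into a bound by $\sum a_j|z|^j$ without absolute values on the coefficients. The normalization makes the resulting weighted sum at most $|z|^{n-1}$. The degenerate case where some $a_j$ vanish (for instance $a_0=0$, giving a root at $0$) causes no difficulty, since the argument only concerns hypothetical roots outside the closed disk.
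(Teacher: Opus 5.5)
Your proof is correct and follows essentially the same route as the paper: assume a root with $|z|>1$, apply the triangle inequality using $a_j\geq 0$, and contradict it via $|z|^j<|z|^n$ together with $\sum_j a_j=1$. Your intermediate bound by $|z|^{n-1}$ makes the strict inequality a little more explicit than the paper does, and your $w=1/z$ variant mirrors the paper's own division by $z^n$.
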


\begin{proof}
The result follows from a contradiction argument. Suppose there is a root of $p(z)$ such that $|z|>1.$ Then 
\begin{equation}
z^n=a_{n-1}z^{n-1} +\hdots +a_0 \Rightarrow 1=a_{n-1}z^{-1} +a_{n-2}z^{-2}+\hdots + a_0z^{-n}.
\label{eq:contrapoly}
\end{equation}
Thus, taking magnitudes, we obtain the following inequality,
\begin{equation}
|z^n|=\left| \sum_{j=0}^{n-1} a_j z^j \right| \leq \sum_{j=0}^{n-1} a_j|z|^j =: f(|z|). 
\label{eq:contra}
\end{equation} However, the assumption $|z|>1$ implies $|z|^j<|z|^n$ for every $j<n$. Moreover, $a_j\geq 0$ and $\sum a_j =1$ imply that $$f(|z|)=\sum_{j=0}^{n-1} a_j|z|^j<|z|^n$$ which contradicts the inequality \eqref{eq:contra}. Thus, there cannot be roots outside the unit circle and the desired result is obtained. 
\end{proof}

\begin{remark}
Proposition \ref{prop:positiveallocation} guarantees that normalized nonnegative coefficients produce roots in the closed unit disk, but their precise distribution depends on the coefficients. Certain coefficient families satisfying these constraints, whether or not they arise as the optimizer for a particular data set, lead to roots whose moduli approach one as the degree increases.

For example, let
$$
a_j=\frac{1}{n},
\qquad
j=0,\ldots,n-1,
$$
so that
$$
p_n(z)
=
z^n-\frac{1}{n}\sum_{j=0}^{n-1}z^j
=
z^n-\frac{1-z^n}{n(1-z)}
$$
for $z\neq 1$. The point $z=1$ is itself a root. For any other root, multiplying by $n(1-z)$ and dividing by $z^n$ gives
$$
(n+1)-nz-z^{-n}=0.
$$

Fix $\varepsilon>0$. If $|z|\leq 1-\varepsilon$, then
$$
|z|^{-n}\geq (1-\varepsilon)^{-n},
$$
whereas
$$
|(n+1)-nz|
\leq
(n+1)+n|z|
\leq
2n+1.
$$
Because $(1-\varepsilon)^{-n}$ grows exponentially in $n$, while $2n+1$ grows only linearly, this equality cannot hold for sufficiently large $n$. Hence, for every $\varepsilon>0$, all roots of $p_n$ satisfy
$$
1-\varepsilon<|z|\leq 1
$$ for sufficiently large $n$.

Thus, for this particular distribution of nonnegative coefficients, the roots concentrate near the unit circle as $n\to\infty$. Since $p_n(1)=0$, the associated companion matrix has spectral radius one. Moreover, nonreal roots occur in complex-conjugate pairs, and roots with modulus close to one produce slowly decaying oscillatory factors in the modal representation. This provides one spectral mechanism by which recurrent qualitative behavior may persist over extended prediction horizons.
\end{remark} 

We now return to the sparse allocation used in the preceding example. The corresponding characteristic polynomial is
$$
p(z)
=
z^{256}
-
\frac{1}{3}z^{192}
-
\frac{1}{2}z^{128}
-
\frac{1}{6}z^5.
$$
Factoring out the smallest power of $z$ gives
$$
p(z)
=
z^5
\left(
z^{251}
-
\frac{1}{3}z^{187}
-
\frac{1}{2}z^{123}
-
\frac{1}{6}
\right).
$$
Thus, $z=0$ is a root of multiplicity five. Let $z\neq 0$ be any other root. Then
$$
z^{251}
-
\frac{1}{3}z^{187}
-
\frac{1}{2}z^{123}
=
\frac{1}{6},
$$
or equivalently,
$$
z^{123}
\left(
z^{128}
-
\frac{1}{3}z^{64}
-
\frac{1}{2}
\right)
=
\frac{1}{6}.
$$
Taking absolute values gives
$$
\frac{1}{6}
=
|z|^{123}
\left|
z^{128}
-
\frac{1}{3}z^{64}
-
\frac{1}{2}
\right|.
$$
By Proposition \ref{prop:positiveallocation}, $|z|\leq 1$. Therefore,
$$
\begin{aligned}
\frac{1}{6}
&\leq
|z|^{123}
\left(
|z|^{128}
+
\frac{1}{3}|z|^{64}
+
\frac{1}{2}
\right)\\
&\leq
|z|^{123}
\left(
1+\frac{1}{3}+\frac{1}{2}
\right)
=
\frac{11}{6}|z|^{123}.
\end{aligned}
$$
It follows that
$$
|z|^{123}\geq\frac{1}{11},
$$
and hence
$$
|z|
\geq
\left(\frac{1}{11}\right)^{1/123}
\approx 0.981.
$$
Consequently, although this sparse allocation produces five eigenvalues at zero, all of its nonzero eigenvalues lie in the annulus
$$
0.981\lesssim |z|\leq 1.
$$
In addition, because the coefficients sum to one, $p(1)=0$, so $z=1$ is an eigenvalue. This example illustrates how a sparse positive allocation may simultaneously produce strongly damped components and nonzero spectral components concentrated near the unit circle.

Having established the spectral consequences of the allocation constraints, we now describe the kernelized formulation used to compute the allocation coefficients in the numerical experiments. The positive-allocation problem may be posed either in the original observable coordinates or in a reproducing kernel Hilbert space. Let
$$
k:\mathbb{R}^m\times\mathbb{R}^m\to\mathbb{R}
$$ be a positive-definite kernel with associated reproducing kernel Hilbert space $\mathcal{H}_k$. There exists a feature map
$$
\varphi:\mathbb{R}^m\to\mathcal{H}_k
$$ such that
\begin{equation}
k(y_i,y_j)
=
\left\langle
\varphi(y_i),\varphi(y_j)
\right\rangle_{\mathcal{H}_k}.
\label{eq:k}
\end{equation}
The kernelized positive-allocation problem is
\begin{equation}
\min_{\alpha\in\mathbb{R}^n}
\left\|
\varphi(y_n)
-
\sum_{j=0}^{n-1}\alpha_j\varphi(y_j)
\right\|_{\mathcal{H}_k}^2
\label{eq:kernel_pa}
\end{equation}
subject to
$$
\alpha_j\geq 0,
\qquad
j=0,\ldots,n-1,
$$
and
$$
\sum_{j=0}^{n-1}\alpha_j=1.
$$

Define the Gram matrix $K\in\mathbb{R}^{n\times n}$ and the target similarity vector $k_n\in\mathbb{R}^n$ by
$$
K_{ij}=k(y_i,y_j),
\qquad
(k_n)_i=k(y_i,y_n),
\qquad
i,j=0,\ldots,n-1.
$$
Expanding the squared norm in \eqref{eq:kernel_pa} gives
$$
\begin{aligned}
\left\|
\varphi(y_n)
-
\sum_{j=0}^{n-1}\alpha_j\varphi(y_j)
\right\|_{\mathcal{H}_k}^2
&=
k(y_n,y_n)
-
2\alpha^T k_n
+
\alpha^T K\alpha.
\end{aligned}
$$
Since $k(y_n,y_n)$ is independent of $\alpha$, the kernelized allocation is equivalently obtained from the quadratic program
\begin{equation}
\min_{\alpha\in\mathbb{R}^n}
\left(
\alpha^T K\alpha-2\alpha^T k_n
\right)
\label{eq:kernel_qp}
\end{equation} subject to the same nonnegativity and normalization constraints. Thus, the allocation coefficients can be computed using kernel evaluations alone, without explicitly constructing the feature map $\varphi$.

\begin{algorithm}[!htbp]
\caption{Positive-Allocation Companion Predictor}
\label{alg:PA}
\begin{algorithmic}[1]

\Require Observable snapshots
$Y=[y_0,y_1,\ldots,y_n]\in\mathbb{R}^{m\times(n+1)}$;
prediction horizon $N_{\mathrm{pred}}$; and, for the kernelized formulation,
a positive-definite kernel $k$.

\Ensure Allocation coefficients $\alpha^*$; companion matrix $C$;
predictor eigenvalues $\{\lambda_\ell\}_{\ell=1}^n$; and model snapshots
$\{\widehat y_k\}$.

\Statex
\State \textbf{Step 1: Separate the training snapshots and target snapshot.}
\State $Y_{\mathrm{train}}\gets [y_0,y_1,\ldots,y_{n-1}]$
\State $y_{\mathrm{target}}\gets y_n$

\Statex
\State \textbf{Step 2: Compute the positive allocation.}
\If{the allocation is computed in the original observable coordinates}
    \State Find $\alpha^*\in\mathbb{R}^n$ solving
    \[
    \min_{\alpha\in\mathbb{R}^n}
    \left\|y_{\mathrm{target}}-Y_{\mathrm{train}}\alpha\right\|_2^2
    \]
    subject to
    \[
    \alpha_j\geq 0,\qquad
    \sum_{j=0}^{n-1}\alpha_j=1.
    \]
\Else
    \State Construct
    \[
    K_{ij}=k(y_i,y_j),\qquad
    (k_n)_i=k(y_i,y_{\mathrm{target}}).
    \]
    \If{regularization is used}
        \State Set $K\gets K+\eta I$, where $\eta>0$.
    \EndIf
    \State Find $\alpha^*\in\mathbb{R}^n$ solving
    \[
    \min_{\alpha\in\mathbb{R}^n}
    \left(\alpha^T K\alpha-2\alpha^T k_n\right)
    \]
    subject to
    \[
    \alpha_j\geq 0,\qquad
    \sum_{j=0}^{n-1}\alpha_j=1.
    \]
\EndIf

\Statex
\State \textbf{Step 3: Construct the companion predictor.}
\State Construct $C=C(\alpha^*)\in\mathbb{R}^{n\times n}$ according to
\eqref{eq:matrix_rep}.
\State Compute the eigenvalues
$\{\lambda_\ell\}_{\ell=1}^n=\operatorname{spec}(C)$.

\Statex
\State \textbf{Step 4: Generate the model trajectory.}
\State $e_1\gets [1,0,\ldots,0]^T\in\mathbb{R}^n$
\For{$k=0,\ldots,n+N_{\mathrm{pred}}$}
    \State $\widehat y_k\gets Y_{\mathrm{train}}C^k e_1$
\EndFor

\end{algorithmic}
\end{algorithm}

In the numerical experiments, we use the Gaussian kernel
\begin{equation}
k(y_i,y_j)
=
\exp\left(
-\frac{\|y_i-y_j\|_2^2}{2\sigma^2}
\right),
\label{eq:gaussian_kernel}
\end{equation}
where $\sigma>0$ is the kernel bandwidth. To improve numerical conditioning, the Gram matrix may be regularized according to
$$
K_{\eta}=K+\eta I,
\qquad
\eta>0,
$$ so that the quadratic term in \eqref{eq:kernel_qp} becomes $\alpha^T K_{\eta}\alpha$. The kernelized quadratic program then becomes
$$
\min_{\alpha\in\mathbb{R}^n}
\left(
\alpha^T K_\eta\alpha-2\alpha^T k_n
\right)
$$
subject to the same nonnegativity and normalization constraints. Since
$$
\alpha^T K_\eta\alpha
=
\alpha^T K\alpha+\eta\|\alpha\|_2^2,
$$ this regularization adds the small penalty $\eta\|\alpha\|_2^2$ to the allocation objective. The preceding formulations determine the allocation coefficients, while the companion matrix and model trajectory are then constructed from those coefficients. The complete procedure is summarized in Algorithm \ref{alg:PA}.

Positive allocation provides a priori information about the spectral structure of the resulting predictor. The optimized coefficient vector $\alpha^*$ determines the companion matrix and its characteristic polynomial, and the roots of that polynomial form the spectrum of the resulting finite-dimensional predictor. Consequently, the nonnegativity and normalization constraints shape the spectral structure at the construction stage, as described in Proposition \ref{prop:positiveallocation}.

When the companion matrix is diagonalizable, the same model trajectory may also be expressed through its eigenvalues and observable-space modes. The following subsection introduces this modal representation. 


\subsection{Modal Representation of the Companion Predictor}
\label{sect:modal_rep}

The companion predictor may be studied from two complementary perspectives. When $C$ is diagonalizable, its powers admit a modal representation that separates the observable-space modes from their temporal factors. Alternatively, the trajectory may be analyzed directly through powers of $C$, without diagonalizing the matrix. In exact arithmetic, the two representations describe the same model trajectory when the diagonalization exists; however, they lead to different forms of analysis and may behave differently in numerical computation. These perspectives motivate the modal and non-modal bounds developed in Section~\ref{sec:bounds}.

The following identity is a standard consequence of diagonalization and underlies modal representations used in Dynamic Mode Decomposition and Koopman spectral analysis; see \cite{TuRoLuBrKu2014, KuBrBrPr2016, HaMe2017, colbrook2024rigorous}. We include it here to establish the notation used in the subsequent analysis.

\medskip

\noindent\textbf{Proposition A.}  (Modal representation of the companion predictor). {\it 
Let
$$
Y_{\mathrm{train}}
=
[y_0,y_1,\ldots,y_{n-1}]
\in\mathbb{R}^{m\times n},
$$ and define the model trajectory by
$$
\widehat y_k
=
Y_{\mathrm{train}}C^ke_1,
\qquad k\geq 0,
$$ where $C\in\mathbb{R}^{n\times n}$ is the companion matrix associated with the allocation coefficients.

Suppose that $C$ is diagonalizable, with
\begin{equation}
C=\Gamma\Lambda\Gamma^{-1},
\label{eq:C_diag}
\end{equation}
where
$$
\Lambda
=
\operatorname{diag}(\lambda_1,\ldots,\lambda_n)
$$ and
$$
\Gamma
=
[v_1\ \cdots\ v_n].
$$ Define
$$
b:=\Gamma^{-1}e_1
$$ and
$$
\psi_\ell:=Y_{\mathrm{train}}v_\ell
\in\mathbb{C}^m.
$$ Then the model trajectory admits the exact modal representation
\begin{equation}
\widehat y_k
=
\sum_{\ell=1}^n
b_\ell\lambda_\ell^k\psi_\ell.
\label{eq:modal_expansion}
\end{equation}
}

\begin{proof}
Using the diagonalization of $C$, we obtain
$$
C^ke_1
=
\Gamma\Lambda^k\Gamma^{-1}e_1
=
\sum_{\ell=1}^n
b_\ell\lambda_\ell^kv_\ell.
$$ Multiplying by $Y_{\mathrm{train}}$ gives
$$
\widehat y_k
=
Y_{\mathrm{train}}C^ke_1
=
\sum_{\ell=1}^n
b_\ell\lambda_\ell^k
Y_{\mathrm{train}}v_\ell
=
\sum_{\ell=1}^n
b_\ell\lambda_\ell^k\psi_\ell.
$$
\end{proof}

\begin{remark}
The temporal factors $\lambda_\ell^k$ form a Vandermonde-type structure. For a collection of prediction indices $k=0,\ldots,N$, the modal trajectory may therefore be represented using a matrix of eigenvalue powers. This form separates the spatial or observable-space modes $\psi_\ell$ from their temporal evolution.
\end{remark}

\section{Modal and Non-modal Bounds}
\label{sec:bounds}

The companion predictor constructed in Section~\ref{sect:pa} via \eqref{eq:Cpred} generates a discrete model trajectory
$$
\widehat y_k=Y_{\mathrm{train}}C^ke_1.
$$
When $C$ is diagonalizable, the modal representation from Section~\ref{sect:modal_rep} allows finite differences of this model trajectory to be computed mode by mode. These finite differences provide a useful lens for interpreting derivative-like quantities computed from the model trajectory: they may emphasize oscillatory or changing components of the prediction while suppressing modes near $\lambda_\ell=1$. 

For a time step $\Delta t>0$, define the forward first and second finite differences by
$$
D_+\widehat y_k
:=
\frac{\widehat y_{k+1}-\widehat y_k}{\Delta t},
$$
and
$$
D_+^2\widehat y_k
:=
\frac{\widehat y_{k+2}-2\widehat y_{k+1}+\widehat y_k}{\Delta t^2}.
$$

\begin{proposition}[Modal representation of forward differences]
\label{prop:modal_forward_differences}
Suppose that the companion matrix $C$ is diagonalizable and that the model trajectory admits the modal representation
$$
\widehat y_k
=
\sum_{\ell=1}^n b_\ell \lambda_\ell^k \psi_\ell,
\qquad
k\geq 0,
$$ where $\lambda_\ell$ are the eigenvalues of $C$, $b_\ell$ are the modal coefficients, and $\psi_\ell$ are the associated observable-space modes. Then the forward first difference satisfies
\begin{equation}
D_+\widehat y_k
=
\frac{1}{\Delta t}
\sum_{\ell=1}^n
b_\ell
(\lambda_\ell-1)
\lambda_\ell^k
\psi_\ell,
\label{eq:first_difference_modal}
\end{equation}
and the forward second difference satisfies
\begin{equation}
D_+^2\widehat y_k
=
\frac{1}{\Delta t^2}
\sum_{\ell=1}^n
b_\ell
(\lambda_\ell-1)^2
\lambda_\ell^k
\psi_\ell.
\label{eq:second_difference_modal}
\end{equation}
Consequently,
\begin{equation}
\|D_+\widehat y_k\|_2
\leq
\frac{1}{\Delta t}
\sum_{\ell=1}^n
|b_\ell|\,|\lambda_\ell-1|\,|\lambda_\ell|^k
\|\psi_\ell\|_2,
\label{eq:first_difference_bound}
\end{equation}
and
\begin{equation}
\|D_+^2\widehat y_k\|_2
\leq
\frac{1}{\Delta t^2}
\sum_{\ell=1}^n
|b_\ell|
|\lambda_\ell-1|^2
|\lambda_\ell|^k
\|\psi_\ell\|_2.
\label{eq:second_difference_bound}
\end{equation}
\end{proposition}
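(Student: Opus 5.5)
The plan is to substitute the modal representation from Proposition~A directly into the definitions of $D_+$ and $D_+^2$. Linearity does the rest. The only structural fact needed is that the expansion \eqref{eq:modal_expansion} holds for every $k\geq 0$ with the \emph{same} coefficients $b_\ell$ and modes $\psi_\ell$. This lets us write $\widehat y_{k+1}$ and $\widehat y_{k+2}$ as sums over the same index $\ell$, with only the power of $\lambda_\ell$ shifted.

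For the first difference, we write
$$
\widehat y_{k+1}-\widehat y_k=\sum_{\ell=1}^n b_\ell\left(\lambda_\ell^{k+1}-\lambda_\ell^k\right)\psi_\ell=\sum_{\ell=1}^n b_\ell(\lambda_\ell-1)\lambda_\ell^k\psi_\ell .
$$
Dividing by $\Delta t$ gives \eqref{eq:first_difference_modal}. For the second difference, we combine the three shifted expansions term by term:
$$
\widehat y_{k+2}-2\widehat y_{k+1}+\widehat y_k=\sum_{\ell=1}^n b_\ell\left(\lambda_\ell^2-2\lambda_\ell+1\right)\lambda_\ell^k\psi_\ell .
$$
The factor in parentheses is $(\lambda_\ell-1)^2$. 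Dividing by $\Delta t^2$ gives \eqref{eq:second_difference_modal}. Equivalently, the second difference is the first difference applied twice, so each application of $D_+$ contributes one factor $(\lambda_\ell-1)/\Delta t$.

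For the norm bounds \eqref{eq:first_difference_bound} and \eqref{eq:second_difference_bound}, we apply the triangle inequality for $\|\cdot\|_2$ on $\mathbb{C}^m$ to the finite sums just obtained. We then use absolute homogeneity, $\|c\,\psi_\ell\|_2=|c|\,\|\psi_\ell\|_2$ for complex scalars $c=b_\ell(\lambda_\ell-1)^j\lambda_\ell^k$ with $j=1,2$. Multiplicativity of the complex modulus then gives $|c|=|b_\ell|\,|\lambda_\ell-1|^j\,|\lambda_\ell|^k$.

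There is no genuine obstacle here. The one point deserving a remark is that $b_\ell$, $\lambda_\ell$ and $\psi_\ell$ may be complex even though $\widehat y_k$ is real. The estimates must therefore be carried out in the Hermitian Euclidean norm on $\mathbb{C}^m$, which restricts to the stated norm on real vectors. In addition, the bounds should not pair conjugate modes before taking norms. Pairing is never needed, since the triangle inequality is applied to the full complex sum.
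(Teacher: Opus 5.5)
Your proposal is correct and follows essentially the same argument as the paper: substitute the modal expansion at indices $k$, $k+1$, $k+2$, factor out $(\lambda_\ell-1)\lambda_\ell^k$ and $(\lambda_\ell-1)^2\lambda_\ell^k$, divide by $\Delta t$ or $\Delta t^2$, and apply the triangle inequality. Your remark about carrying out the estimates in the Hermitian norm on $\mathbb{C}^m$ is a sensible clarification that the paper leaves implicit.
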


\begin{proof}
Using the modal representation, we compute
$$
\widehat y_{k+1}-\widehat y_k
=
\sum_{\ell=1}^n
b_\ell
\left(
\lambda_\ell^{k+1}-\lambda_\ell^k
\right)
\psi_\ell
=
\sum_{\ell=1}^n
b_\ell
(\lambda_\ell-1)
\lambda_\ell^k
\psi_\ell.
$$
Dividing by $\Delta t$ gives \eqref{eq:first_difference_modal}.

Similarly,
$$
\widehat y_{k+2}-2\widehat y_{k+1}+\widehat y_k
=
\sum_{\ell=1}^n
b_\ell
\left(
\lambda_\ell^{k+2}
-
2\lambda_\ell^{k+1}
+
\lambda_\ell^k
\right)
\psi_\ell.
$$
Factoring each modal term gives
$$
\lambda_\ell^{k+2}
-
2\lambda_\ell^{k+1}
+
\lambda_\ell^k
=
(\lambda_\ell-1)^2\lambda_\ell^k.
$$
Dividing by $\Delta t^2$ gives \eqref{eq:second_difference_modal}. The norm bounds \eqref{eq:first_difference_bound} and \eqref{eq:second_difference_bound} follow by taking Euclidean norms and applying the triangle inequality.
\end{proof}

Equations~\eqref{eq:first_difference_modal} and \eqref{eq:second_difference_modal} show that finite differences of the model trajectory act as spectral filters. Each mode is weighted by $\lambda_\ell-1$ in the first difference and by $(\lambda_\ell-1)^2$ in the second difference. Thus, modes with $\lambda_\ell$ close to $1$ contribute weakly to the forward differences, while modes with nonzero phase contribute oscillatory finite-difference behavior. This provides a useful lens for interpreting derivative-like diagnostics computed from the model trajectory, especially when pointwise positional agreement degrades. In Section~\ref{sect:numerical}, we use this perspective to interpret the observed behavior of the first- and second-difference diagnostics.

We now record an internal growth bound for the constructed model trajectory. This bound describes the size of $\widehat y_k$ in terms of the modal representation of the companion predictor.

\begin{proposition}[Modal growth bound]
\label{prop:modal_growth}
Suppose that the companion matrix $C$ is diagonalizable and that the model trajectory admits the modal representation
$$
\widehat y_k
=
\sum_{\ell=1}^n b_\ell \lambda_\ell^k \psi_\ell,
\qquad
k\geq 0.
$$
Let
$$
\rho=\max_{1\leq \ell\leq n}|\lambda_\ell|.
$$
Define the observable-mode matrix
$$
M=
[\psi_1 \ \psi_2 \ \cdots \ \psi_n]
$$
and the modal coefficient vector
$$
b=[b_1,\ldots,b_n]^T.
$$
Then, with respect to the Euclidean norm,
\begin{equation}
\|\widehat y_k\|_2
\leq
\|M\|_2 \|b\|_2 \rho^k.
\label{eq:modal_growth}
\end{equation}
\end{proposition}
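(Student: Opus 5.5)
The plan is to rewrite the modal sum as a single matrix–vector product and then bound each factor separately. Set $\Lambda^k=\operatorname{diag}(\lambda_1^k,\ldots,\lambda_n^k)$. By Proposition A,
$$
\widehat y_k=\sum_{\ell=1}^n b_\ell\lambda_\ell^k\psi_\ell = M\Lambda^k b,
$$
because the $\ell$-th column of $M$ is $\psi_\ell$ and the $\ell$-th entry of $\Lambda^k b$ is $\lambda_\ell^k b_\ell$. Checking this identity is a one-line column expansion. It holds with $M\in\mathbb{C}^{m\times n}$ and $b\in\mathbb{C}^n$, so from here on all norms are the Euclidean norms on complex vectors and the induced operator norm.

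Next, submultiplicativity of the induced norm gives
$$
\|\widehat y_k\|_2\le \|M\|_2\,\|\Lambda^k\|_2\,\|b\|_2 .
$$
Since $\Lambda^k$ is diagonal, it is normal, and its induced $2$-norm equals the largest modulus of its diagonal entries. Hence $\|\Lambda^k\|_2=\max_\ell|\lambda_\ell|^k=\rho^k$. Substituting this value yields \eqref{eq:modal_growth}.

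No step here is a real obstacle. The only points that need care are the following:
\begin{itemize}
\item Keep the order of the diagonal factor between $M$ and $b$ correct, so that $M\Lambda^k b$ reproduces the sum exactly.
\item State explicitly that $\|\cdot\|_2$ is the complex Euclidean norm, since the eigenvalues and modes may be complex.
\item Note the case $k=0$, where $\rho^0=1$ by convention even if $\rho=0$.
\end{itemize}
By Proposition \ref{prop:positiveallocation}, $\rho\le 1$ for a positive allocation, and in fact $\rho=1$ because $p(1)=0$. The bound therefore reduces to the uniform estimate $\|\widehat y_k\|_2\le\|M\|_2\|b\|_2$, which could be noted as a remark. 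That remark is not needed for the proof itself.
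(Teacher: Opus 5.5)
Your proposal is correct and is essentially the paper's own proof: write $\widehat y_k = M\Lambda^k b$, apply submultiplicativity of the induced $2$-norm, and use $\|\Lambda^k\|_2=\rho^k$ for the diagonal factor. Your side remarks on complex Euclidean norms, the $k=0$ case, and $\rho=1$ for positive allocation are all consistent with the paper, which records the last one as a separate remark after the proof.
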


\begin{proof}
The modal representation may be written in matrix form as
$$
\widehat y_k
=
M\Lambda^k b,
$$
where
$$
\Lambda=\operatorname{diag}(\lambda_1,\ldots,\lambda_n).
$$
Therefore,
$$
\|\widehat y_k\|_2
\leq
\|M\|_2
\|\Lambda^k\|_2
\|b\|_2.
$$
Since $\Lambda$ is diagonal,
$$
\|\Lambda^k\|_2
=
\max_{1\leq \ell\leq n}|\lambda_\ell|^k
=
\rho^k.
$$
This gives
$$
\|\widehat y_k\|_2
\leq
\|M\|_2\|b\|_2\rho^k,
$$
as claimed.
\end{proof}

\begin{remark}
For model trajectories obtained from normalized positive allocation, Proposition~\ref{prop:positiveallocation} implies that all eigenvalues of the companion matrix lie in the closed unit disk. Moreover, since the allocation coefficients sum to one, the associated characteristic polynomial satisfies $p(1)=0$. Hence $1$ is an eigenvalue of $C$, and the spectral radius of the companion predictor is exactly one:
$$
\rho(C)=1.
$$
Thus, the modal growth bound does not imply exponential growth for positive-allocation predictors. The size of the model trajectory is then controlled by the observable-mode matrix $M$ and the modal coefficient vector $b$, together with possible nonnormal or conditioning effects.
\end{remark}

The observable-space modes in this bound are determined by the training snapshots. Indeed, from Section~\ref{sect:modal_rep},
$$
\psi_\ell=Y_{\mathrm{train}}v_\ell,
$$
where $v_\ell$ is an eigenvector of the companion matrix $C$. Therefore each mode $\psi_\ell$ lies in the column span of $Y_{\mathrm{train}}$.
If
$$
\Gamma=[v_1 \ \cdots \ v_n],
$$
then
$$
M=[\psi_1 \ \cdots \ \psi_n]
=
Y_{\mathrm{train}}\Gamma.
$$
Consequently,
$$
\|M\|_2
\leq
\|Y_{\mathrm{train}}\|_2\|\Gamma\|_2.
$$
This expresses the modal growth bound in terms of the observed training data and the conditioning of the eigenvector matrix of the companion predictor.

The preceding modal identities describe internal properties of the constructed model trajectory. They explain how the companion spectrum enters the finite differences and growth behavior of the predictor. However, these internal bounds do not by themselves imply accuracy with respect to the original nonlinear dynamics. We therefore record a standard discrete error decomposition that separates amplification under the true dynamics from the one-step residual of the model trajectory.

\begin{proposition}[Discrete prediction error decomposition]
\label{prop:error_decomposition}
Let $F:\Omega\to\Omega$ be the true discrete-time state map, and let
$$
x_{k+1}=F(x_k)
$$
be the true trajectory. Suppose that $F$ is Lipschitz with constant $L>0$, so that
$$
\|F(x)-F(z)\|_2 \le L\|x-z\|_2
$$
for all relevant states $x,z\in\Omega$.

Assume that the physical state coordinates are included among the
observables, or that a fixed reconstruction map from observable
snapshots to state snapshots has been specified. Let $\widehat x_k$ denote the predicted state recovered from the model
trajectory $\widehat y_k$. Define the prediction error
$$
e_k := \|x_k-\widehat x_k\|_2
$$
and the one-step residual
$$
\delta_k := \|F(\widehat x_k)-\widehat x_{k+1}\|_2.
$$
Then, for $k\geq 1$,
\begin{equation}
e_k
\leq
L^{k-1}e_1
+
\sum_{m=1}^{k-1}
L^{k-1-m}\delta_m.
\label{eq:errorbound}
\end{equation}
\end{proposition}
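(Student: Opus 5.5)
The plan is a one-step recursion followed by a discrete Gr\"onwall-type unrolling. For each $m\geq 1$, I would insert $F(\widehat x_m)$ between the true and predicted states:
$$
x_{m+1}-\widehat x_{m+1}
=
\bigl(F(x_m)-F(\widehat x_m)\bigr)
+
\bigl(F(\widehat x_m)-\widehat x_{m+1}\bigr).
$$
Taking Euclidean norms and applying the triangle inequality together with the Lipschitz hypothesis gives
$$
e_{m+1}\leq L e_m+\delta_m,\qquad m\geq 1 .
$$
This requires $\widehat x_m\in\Omega$ (or at least in the set where the Lipschitz bound holds). I would state this explicitly as part of the standing assumption ``for all relevant states'', since the reconstructed states need not lie on the true orbit.

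Next I would prove \eqref{eq:errorbound} by induction on $k$. For $k=1$ the sum is empty and the bound reads $e_1\leq e_1$. Assuming the bound at level $k$, multiply it by $L>0$ and add $\delta_k$:
$$
e_{k+1}\leq L e_k+\delta_k
\leq
L^{k}e_1+\sum_{m=1}^{k-1}L^{k-m}\delta_m+\delta_k
=
L^{k}e_1+\sum_{m=1}^{k}L^{k-m}\delta_m .
$$
This is exactly the claimed bound with $k$ replaced by $k+1$. Positivity of $L$ is what lets the inequality survive the multiplication.

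There is no real analytic obstacle here. The only points needing care are bookkeeping.

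\begin{itemize}
\item \emph{Index alignment.} The recursion starts at $m=1$ because the bound is anchored at $e_1$ rather than $e_0$. Starting at $m=1$ is harmless, and one could equally anchor at $e_0$; by the shift structure, $\widehat y_0=y_0$, so $e_0=0$ when the reconstruction map is exact on the data.
\item \emph{Well-posedness of the residual.} The hypothesis that the state coordinates are among the observables, or that a fixed reconstruction map exists, is used only to make $\widehat x_k$ well defined, so that $\delta_k$ makes sense.
\end{itemize}

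I would also remark that nothing about the companion structure or positive allocation enters this proof; those only affect the size of the residuals $\delta_m$.
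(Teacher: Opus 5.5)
Your proof is correct and follows the paper's argument: you insert $F(\widehat x_m)$ and apply the triangle inequality and the Lipschitz bound to get $e_{m+1}\le Le_m+\delta_m$, and your explicit induction simply makes precise the paper's ``iterating this inequality'' step. Your remark that each $\widehat x_m$ must lie in the region where the Lipschitz bound holds is a fair sharpening of the paper's ``for all relevant states'' hypothesis.
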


\begin{proof}
For each $m\geq 1$, we have
$$
\begin{aligned}
e_{m+1}
&=
\|x_{m+1}-\widehat x_{m+1}\|_2  \\
&=
\|F(x_m)-\widehat x_{m+1}\|_2 \\
&\leq
\|F(x_m)-F(\widehat x_m)\|_2
+
\|F(\widehat x_m)-\widehat x_{m+1}\|_2 \\
&\leq
L e_m+\delta_m.
\end{aligned}
$$
Iterating this inequality gives
$$
e_k
\leq
L^{k-1}e_1
+
\sum_{m=1}^{k-1}
L^{k-1-m}\delta_m,
$$
which proves the claim.
\end{proof}
The bound \eqref{eq:errorbound} separates two sources of prediction error. The term $L^{k-1}e_1$ describes the amplification of the initial prediction error by the true nonlinear dynamics, while the residual terms $\delta_m$ measure the one-step discrepancy between the true dynamics and the model-generated trajectory. Thus, even when the companion predictor has controlled internal spectral behavior, errors may accumulate if the underlying dynamics are sensitive or if the model trajectory is not approximately invariant under $F$.

The modal results above provide interpretable bounds in terms of eigenvalues, modal coefficients, and observable-space modes. Their use, however, relies on diagonalizability of the companion matrix and may be sensitive to the conditioning of the eigenvector matrix. We now turn to a complementary $C$-based analysis that avoids modal decomposition and bounds finite differences directly through powers of the companion matrix.


\subsection{$C$-Based Non-Modal Finite-Difference Bounds}
\label{sect:nonmodal}

We now derive finite-difference bounds that avoid modal decomposition. Unlike the modal bounds above, these estimates do not require the companion matrix $C$ to be diagonalizable. Instead, they follow directly from the model trajectory
$$
\widehat y_k=Y_{\mathrm{train}}C^ke_1
$$ and the submultiplicative property of matrix norms.

\begin{proposition}[$C$-based finite-difference bounds]
\label{prop:C_based_bounds}
Let
$$
\widehat y_k=Y_{\mathrm{train}}C^ke_1,
\qquad k\geq 0,
$$
where $C\in\mathbb{R}^{n\times n}$ is the companion matrix, $Y_{\mathrm{train}}\in\mathbb{R}^{m\times n}$ is the matrix of training snapshots, and $e_1=(1,0,\ldots,0)^T\in\mathbb{R}^n$. For $\Delta t>0$, define the forward finite differences
$$
D_+\widehat y_k
:=
\frac{\widehat y_{k+1}-\widehat y_k}{\Delta t},
$$
and
$$
D_+^2\widehat y_k
:=
\frac{\widehat y_{k+2}-2\widehat y_{k+1}+\widehat y_k}{\Delta t^2}.
$$
Then
\begin{equation}
\|D_+\widehat y_k\|_2
\le
\frac{1}{\Delta t}
\|Y_{\mathrm{train}}\|_2
\|C^k(C-I)e_1\|_2,
\label{eq:Cbound_first}
\end{equation}
and
\begin{equation}
\|D_+^2\widehat y_k\|_2
\leq
\frac{1}{\Delta t^2}
\|Y_{\mathrm{train}}\|_2
\left\|
C^k(C-I)^2e_1
\right\|_2.
\label{eq:Cbound_second}
\end{equation}
\end{proposition}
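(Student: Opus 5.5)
The plan is to write each finite difference as $Y_{\mathrm{train}}$ applied to a single vector in coefficient space, and then use the definition of the induced matrix norm. No diagonalization is needed. The only structural facts used are that $C$ commutes with its own powers and with $I$, and that the induced norm satisfies $\|Av\|_2\le\|A\|_2\|v\|_2$.

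\textbf{First difference.} By the definition of the model trajectory,
$$
\widehat y_{k+1}-\widehat y_k
=
Y_{\mathrm{train}}\left(C^{k+1}-C^k\right)e_1 .
$$
Since $C^{k+1}-C^k=C^k(C-I)$, which holds because $C$ commutes with $I$, this becomes
$$
\widehat y_{k+1}-\widehat y_k=Y_{\mathrm{train}}\,C^k(C-I)e_1 .
$$
Dividing by $\Delta t$ and taking Euclidean norms then gives \eqref{eq:Cbound_first}, with the vector $C^k(C-I)e_1$ left inside the norm.

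\textbf{Second difference.} In the same way,
$$
\widehat y_{k+2}-2\widehat y_{k+1}+\widehat y_k
=
Y_{\mathrm{train}}\left(C^{k+2}-2C^{k+1}+C^k\right)e_1 .
$$
The polynomial identity $C^{k+2}-2C^{k+1}+C^k=C^k(C^2-2C+I)=C^k(C-I)^2$ is valid for any square matrix, since only powers of $C$ appear. Applying the operator-norm inequality to $Y_{\mathrm{train}}$ and dividing by $\Delta t^2$ yields \eqref{eq:Cbound_second}.

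There is no genuine obstacle here. The only point needing care is to keep $C^k(C-I)^j e_1$ together as one vector rather than splitting it further via submultiplicativity, so that the bound matches the stated form exactly. Splitting it would only give the weaker bound $\|C^k\|_2\|(C-I)^j e_1\|_2$, which could be mentioned as a corollary.
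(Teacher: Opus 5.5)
Your proposal is correct and matches the paper's proof essentially line for line. Like the paper, you factor $C^{k+1}-C^k=C^k(C-I)$ and $C^{k+2}-2C^{k+1}+C^k=C^k(C-I)^2$, apply $\|Y_{\mathrm{train}}v\|_2\le\|Y_{\mathrm{train}}\|_2\|v\|_2$ with $v=C^k(C-I)^je_1$ kept intact, and divide by the appropriate power of $\Delta t$.
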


\begin{proof}
Using the model trajectory, we have
$$
\widehat y_{k+1}-\widehat y_k
=
Y_{\mathrm{train}}
\left(
C^{k+1}-C^k
\right)e_1.
$$
Factoring gives
$$
\widehat y_{k+1}-\widehat y_k
=
Y_{\mathrm{train}}C^k(C-I)e_1.
$$
Taking Euclidean norms and applying submultiplicativity of the induced
matrix norm gives
$$
\|\widehat y_{k+1}-\widehat y_k\|_2
\le
\|Y_{\mathrm{train}}\|_2
\left\|
C^k(C-I)e_1
\right\|_2.
$$
Dividing by $\Delta t$ proves \eqref{eq:Cbound_first}.

Similarly,
$$
\widehat y_{k+2}-2\widehat y_{k+1}+\widehat y_k
=
Y_{\mathrm{train}}
\left(
C^{k+2}-2C^{k+1}+C^k
\right)e_1.
$$
Since
$$
C^{k+2}-2C^{k+1}+C^k
=
C^k(C-I)^2,
$$
we obtain
$$
\widehat y_{k+2}-2\widehat y_{k+1}+\widehat y_k
=
Y_{\mathrm{train}}C^k(C-I)^2e_1.
$$
Taking norms and dividing by $\Delta t^2$ gives
\eqref{eq:Cbound_second}.
\end{proof}

The bounds \eqref{eq:Cbound_first} and \eqref{eq:Cbound_second} can be evaluated directly from $Y_{\mathrm{train}}$, $C$, and $e_1$, without computing eigenvalues or eigenvectors. They therefore remain valid even when $C$ is defective or when its eigenvector matrix is poorly conditioned. In the numerical experiments, these quantities provide a non-modal diagnostic for the size of the first and second finite differences of the constructed model trajectory.

For visualization, we compare the observed finite-difference norms
$$
\|D_+\widehat y_k\|_2
\quad\text{and}\quad
\|D_+^2\widehat y_k\|_2
$$
with the corresponding $C$-based upper bounds
$$
\frac{1}{\Delta t}
\|Y_{\mathrm{train}}\|_2
\|C^k(C-I)e_1\|_2
$$
and
$$
\frac{1}{\Delta t^2}
\|Y_{\mathrm{train}}\|_2
\|C^k(C-I)^2e_1\|_2.
$$
These quantities are plotted on a logarithmic scale to compare their relative size across prediction time steps.


\section{Numerical Results and Model Diagnostics}
\label{sect:numerical}

This section examines the positive-allocation companion predictor on two nonlinear dynamical systems with different qualitative behavior: the FitzHugh--Nagumo system and the susceptible--infectious--recovered (SIR) epidemiological model. The FitzHugh--Nagumo system provides an oscillatory test case with recurrent structure, while the SIR model provides a transient, dissipative test case with monotone approach toward equilibrium.

For each system, we construct the companion predictor from observed trajectory data and evaluate the resulting model trajectory, spectral structure, modal activity, and finite-difference diagnostics. The goal of these experiments is to illustrate how the positive-allocation construction behaves in systems with contrasting dynamics and to examine how the modal and $C$-based diagnostics developed in Section~\ref{sec:bounds} appear in practice. Together, the examples highlight both the interpretability and the limitations of the method, including the effects of recurrence, dissipation, scaling, and numerical conditioning.

\subsection{Positive-Allocation Companion Prediction for the FitzHugh--Nagumo System}
\label{subsec:fhn}
\begin{figure}[t]
    \centering
    \includegraphics[width=0.95\textwidth]{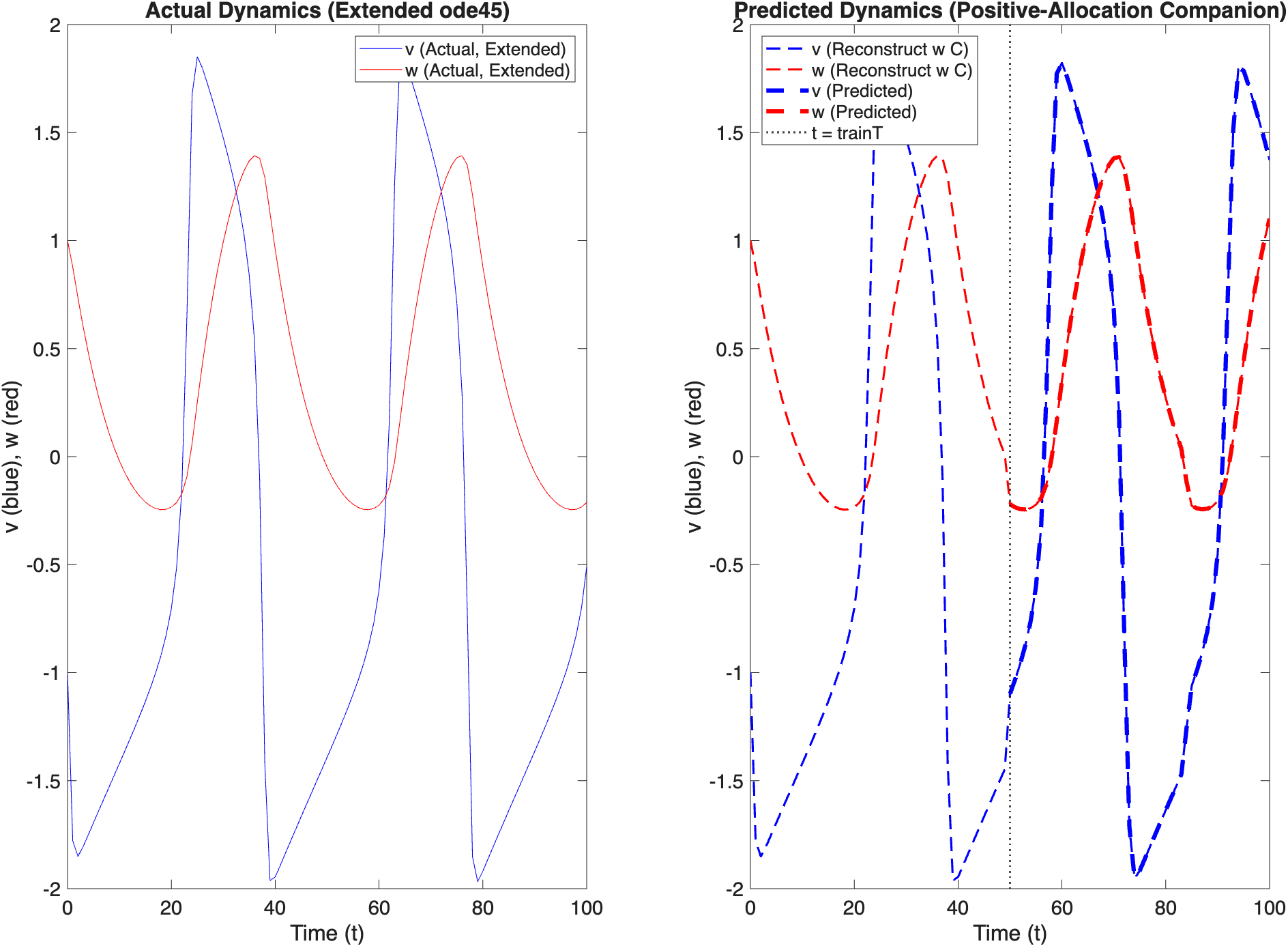} 
    \caption{Comparison of simulated and positive-allocation companion predicted dynamics for the FitzHugh--Nagumo system. The model trajectory preserves several qualitative features of the oscillatory dynamics, including repeated spiking behavior and the coupled evolution of the voltage-like variable $v(t)$ and recovery variable $w(t)$. A phase shift develops over the extended prediction horizon. The vertical dotted line marks the transition from training to prediction at $t=50$.}
	\label{fig:FHNDynamics}
\end{figure}

The FitzHugh--Nagumo system models the evolution of a voltage-like excitation variable $v(t)$ and a recovery variable $w(t)$ in excitable media, often used as a simplification of the Hodgkin-Huxley model for neuronal spiking dynamics \cite{fitzhugh1961impulses,nagumo1962active}. The system is given by
\begin{align}
\frac{dv}{dt} &= v - \frac{v^3}{3} - w + I, \\
\frac{dw}{dt} &= \varepsilon (v + a - b w),
\end{align} where $I$ is a constant stimulus current, and $\varepsilon, a, b$ are positive parameters governing time-scale separation and nullcline shapes. 

The simulated training data were collected with selected parameter values $\varepsilon = 0.08, a=0.7, b=0.8$ and $I=0.5$ which yield a regime in which the system exhibits relaxation oscillations as shown on the left-hand plot of Figure \ref{fig:FHNDynamics}. The state trajectories exhibit periodic spiking behavior, with $v(t)$ acting as the fast variable and $w(t)$ as the slow recovery component. The solution was sampled at intervals of $\Delta t=1$ over a training interval sufficient to capture multiple oscillatory cycles. 

The results in Figure \ref{fig:FHNDynamics} were obtained by collecting 51 snapshots over the training interval $[0,50].$ The final training snapshot was approximated as a convex combination of the preceding $50$ snapshots, and the resulting allocation coefficients were used to construct a companion matrix $C\in\mathbb R^{50\times 50}$. The model trajectory was then generated by repeated application of the companion predictor, as in \eqref{eq:Cpred}. See Section~\ref{sect:pa} for the positive-allocation formulation and the kernelized implementation used in the numerical experiments.

Before applying the positive-allocation procedure, the observable data were centered and scaled by observable coordinate. After prediction, the model trajectory was transformed back to the original coordinate scale for comparison with the simulated FitzHugh--Nagumo trajectory. In this example, scaling improved the qualitative behavior of the prediction even though it also increased the condition number of the companion matrix $C$ (e.g., $\approx 3.3\times 10^{12}$ ompared with $1.8\times 10^4$ without scaling). Without scaling, it is possible that higher-order nonlinear terms can dominate numerically even if they are not dynamically dominant. This illustrates that conditioning alone does not fully determine predictive behavior; the interaction among scaling, allocation weights, recurrence structure, and spectral distribution also affects the resulting model trajectory. 

The recurrent structure of the FitzHugh--Nagumo trajectory makes it a natural test case for positive allocation. Since the method approximates the final training snapshot using a convex combination of earlier snapshots, recurrent dynamics increase the likelihood that earlier states contain useful information for constructing the recurrence. Thus, the example provides a useful setting for examining how near-unit spectral components and sparse snapshot contributions affect long-horizon prediction.

Figure~\ref{fig:FHNDynamics} compares the model trajectory generated by the positive-allocation companion predictor with the simulated FitzHugh--Nagumo trajectory. The prediction reproduces several qualitative features of the oscillatory dynamics, including the repeated spiking structure and the coupled behavior of $v(t)$ and $w(t)$. A phase shift is visible as the prediction evolves, indicating that pointwise trajectory agreement degrades over longer horizons. Nevertheless, the model preserves important recurrent features beyond the training window, which motivates the finite-difference and spectral diagnostics examined below. The visible transition at the end of the training interval at $t=50$ marks the point at which the companion recurrence begins generating the model trajectory beyond the observed data. 

\begin{figure}[ht!]
    \centering     \includegraphics[width=\textwidth]{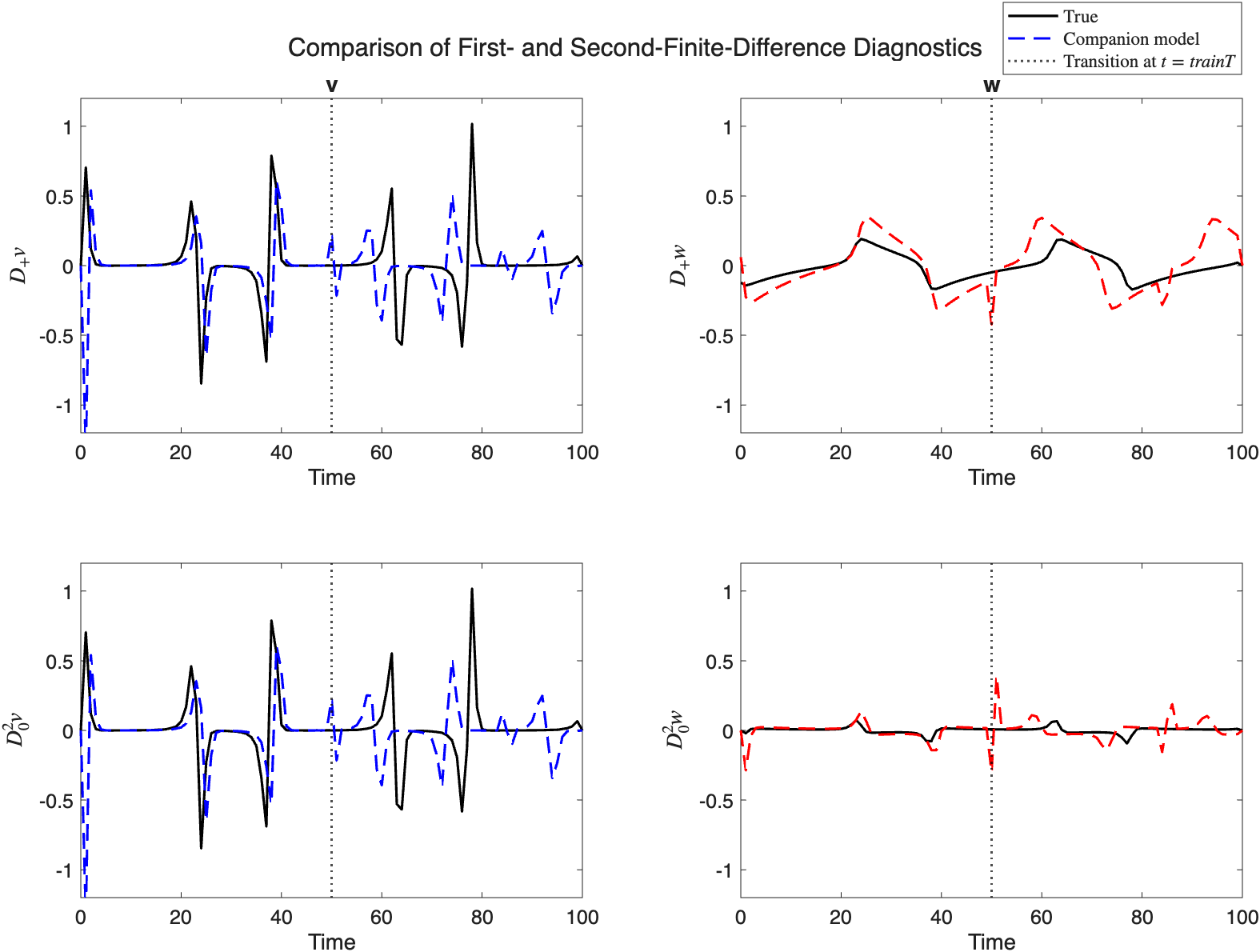} 
    \caption{Comparison of first- and second-order finite-difference diagnostics for the FitzHugh--Nagumo system. The diagnostics are computed from the simulated trajectory and from the positive-allocation companion model trajectory. The vertical dotted line marks the transition from training to prediction at $t=50$.}
    \label{fig:FHNderivcompare}
\end{figure}


\subsubsection{Finite-Difference Diagnostics and $C$-Based Estimates}

The positive-allocation companion predictor was further evaluated using first- and second-order finite-difference diagnostics. These quantities were computed from the model trajectory and compared with corresponding finite-difference quantities computed from the simulated FitzHugh--Nagumo trajectory. As shown in Figure~\ref{fig:FHNderivcompare}, the first finite difference of the model trajectory captures much of the oscillatory structure of the system, especially in the voltage-like variable $v(t)$, although phase drift accumulates over longer prediction horizons. The second finite difference is more sensitive to this drift and exhibits larger discrepancies, particularly in the recovery variable $w(t)$.

\begin{figure}[h]
    \centering
    \includegraphics[width=\textwidth]{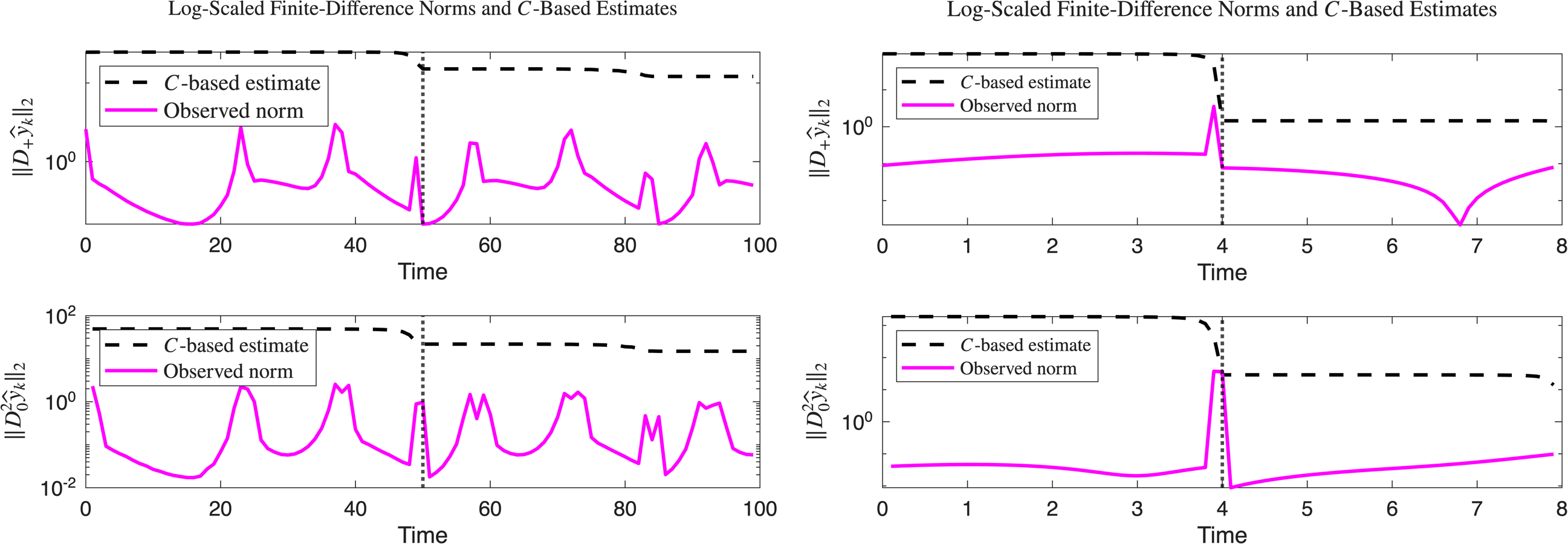}
    \caption{Log-scaled comparison of finite-difference norms and corresponding
    $C$-based diagnostic quantities for (a) FitzHugh--Nagumo on the left and (b) SIR model on the right. The vertical markers indicate the transition from training to prediction. These plots provide non-modal diagnostics of the companion-generated recurrence rather than direct measures of pointwise prediction accuracy.}
    \label{fig:logscale-bounds}
\end{figure}

To interpret these diagnostics, we also evaluate the non-modal $C$-based bounds from Proposition~\ref{prop:C_based_bounds}. These bounds are computed directly from $Y_{\mathrm{train}}$, $C$, and $e_1$, and therefore do not require diagonalization of the companion matrix. Figure~\ref{fig:logscale-bounds}a compares the observed finite-difference norms with their corresponding $C$-based upper bounds on a logarithmic scale. The model finite-difference norms remain below the bound envelope in this experiment, while the gap between the observed quantities and the bounds reflects the conservatism expected from norm-based estimates. This comparison provides a diagnostic for the size of the first and second finite differences generated by the companion predictor, rather than a direct guarantee of pointwise trajectory accuracy.

\begin{remark}
The positive-allocation companion construction is sensitive to implementation choices such as temporal resolution, scaling, and the length of the training window. In the FitzHugh--Nagumo experiments, decreasing the time step or extending the training window can increase the condition number of the companion matrix and degrade the numerical behavior of the resulting prediction. In exact arithmetic, the eigenvalues of the companion matrix coincide with the roots of its characteristic polynomial; therefore, any observed discrepancy between computed eigenvalues and computed characteristic roots should be interpreted as a numerical conditioning effect rather than a mathematical inconsistency. These observations suggest that the training window and sampling resolution should be treated as part of the modeling procedure rather than as purely incidental numerical choices.
\end{remark}

\begin{remark}
The analytical finite-difference identities in Section~\ref{sec:bounds} are stated using forward differences. In the numerical implementation, the first finite-difference diagnostic is computed using the forward difference
$$
\frac{\widehat y_{k+1}-\widehat y_k}{\Delta t},
$$
while the second finite-difference diagnostic is computed using the
centered difference
$$
\frac{\widehat y_{k+1}-2\widehat y_k+\widehat y_{k-1}}{\Delta t^2}.
$$
This centered second difference corresponds to the same $C$-based
second-difference expression up to an index shift, since
$$
\widehat y_{k+1}-2\widehat y_k+\widehat y_{k-1}
=
Y_{\mathrm{train}}C^{k-1}(C-I)^2e_1.
$$
Thus, the numerical diagnostic is consistent with the non-modal $C$-based analysis, with the time index shifted relative to the forward second-difference formula. Because finite-difference stencils involve neighboring snapshots, diagnostic quantities that use the first recurrence-generated snapshot may show a one-index transition relative to the vertical marker used for the state trajectory.
\end{remark}


\subsection{Positive-Allocation Companion Prediction for the SIR Model}
\label{subsec:sir}
\begin{figure}[ht]
    \centering
    \includegraphics[width=0.95\textwidth]{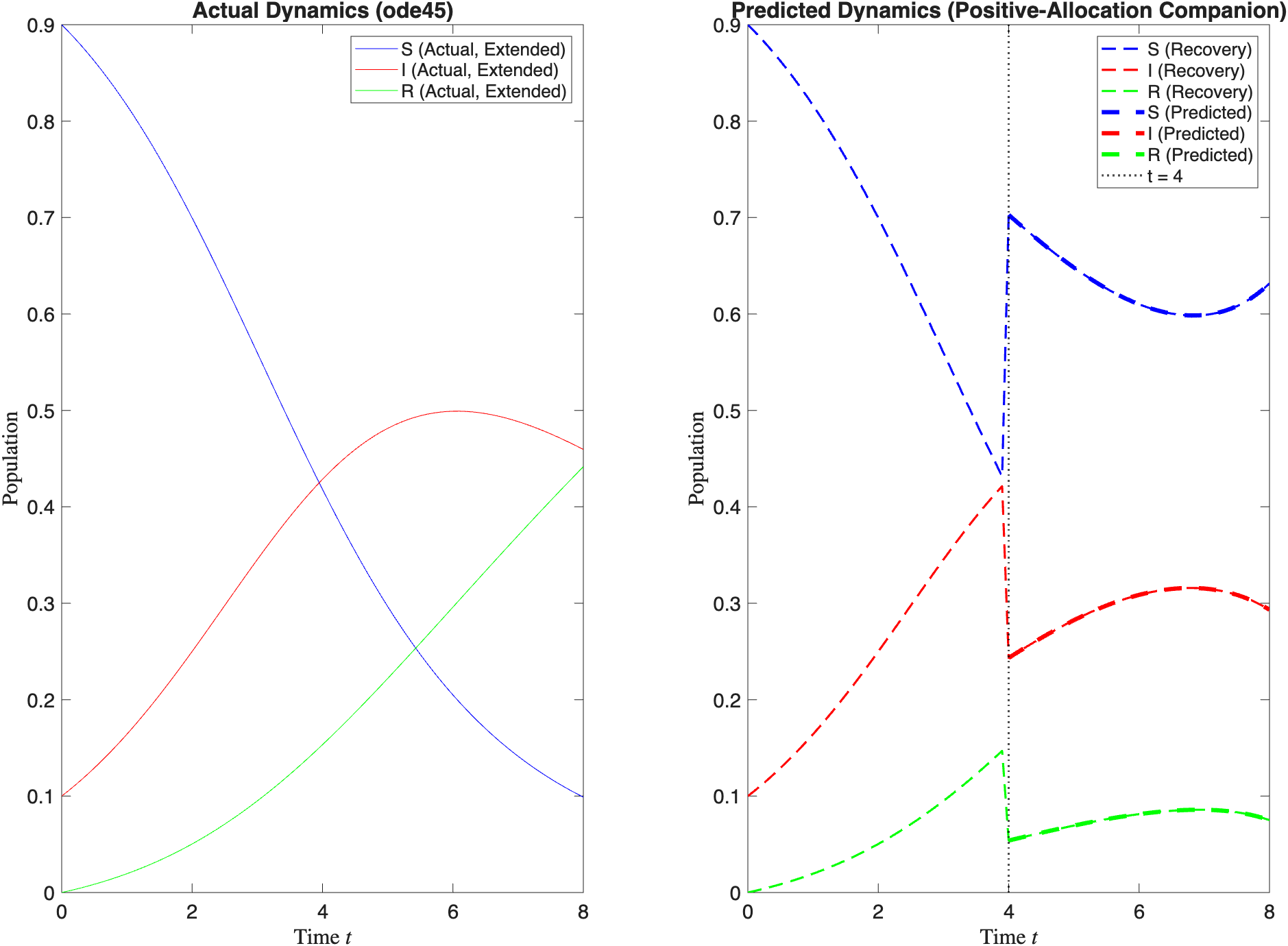} 
    \caption{Comparison of simulated and positive-allocation companion predicted dynamics for the SIR epidemiological model. The model reflects some qualitative features including the transient rise and subsequent decline of the infectious component, but pointwise agreement degrades over the prediction horizon. The vertical dotted line marks the transition from training to prediction at $t=4$.}
	\label{fig:SIRDynamics}
\end{figure}
We now apply the positive-allocation companion predictor to the classical susceptible--infectious--recovered (SIR) epidemiological model, originating in the work of Kermack and McKendrick~\cite{kermack1927contribution}. This example provides a contrast with the oscillatory FitzHugh--Nagumo system: rather than exhibiting recurrent dynamics, the SIR trajectory contains transient growth in the infectious population followed by dissipative approach toward equilibrium. The governing equations are given by
\begin{equation}
\begin{aligned}
\frac{dS}{dt} &= -\beta S I, \\
\frac{dI}{dt} &= \beta S I - \gamma I, \\
\frac{dR}{dt} &= \gamma I,
\end{aligned}
\end{equation}
where $S(t)$, $I(t)$, and $R(t)$ denote the susceptible, infectious, and recovered populations at time $t$, and with parameter values $\beta=0.75$ and $\gamma=0.15$ which represent transmission and recovery rates, respectively. The solution was sampled at intervals of $\Delta t=0.1$ over the extended interval $[0,8]$, while the training interval is taken to be $[0,4]$.

To construct the companion predictor, we embed the simulated trajectory into the observable space
$$
\mathbf g(S,I,R) =
[S,\ I,\ R,\ I^2]^T.
$$ This observable set augments the state variables with a simple nonlinear term that provides sensitivity to infectious growth while keeping the observable dimension small. The final training snapshot is approximated by positive allocation using the preceding training snapshots, and the resulting coefficients are used to construct the companion matrix $C$. The model trajectory is then generated by repeated application of the companion predictor, as in \eqref{eq:Cpred}.

Figure~\ref{fig:SIRDynamics} compares the model trajectory with the simulated SIR dynamics. The prediction reflects some qualitative structure of the susceptible, infectious, and recovered populations, including the transient growth and subsequent decline of the infectious component. As in the FitzHugh--Nagumo example, the transition from training to prediction is marked by the vertical line. The SIR example is useful because it tests the positive-allocation construction in a setting where the dynamics are not recurrent in the same way as an oscillatory system.

We also compare first- and second-order finite-difference diagnostics computed from the model trajectory with corresponding finite-difference quantities computed from the simulated SIR trajectory. In the implementation, the first finite difference is computed using a forward difference, while the second finite difference is computed using a centered difference, as described in the implementation remark above.

These diagnostics describe the rate-of-change and curvature behavior of the generated model trajectory. In the SIR example, this distinction is important because pointwise agreement of the predicted state variables is limited over the full prediction horizon. The finite-difference diagnostics therefore provide a complementary view of the recurrence: they indicate whether the companion predictor captures aspects of the local temporal structure, such as growth, decay, and curvature, even when the positional trajectory itself is not accurately tracked. In this sense, the SIR example illustrates both a limitation of the predictor and the usefulness of derivative-like diagnostics for understanding how the constructed model trajectory evolves.

Figure~\ref{fig:logscale-bounds}b compares the observed finite-difference norms with the corresponding $C$-based diagnostic quantities on a logarithmic scale. The first-difference quantity is computed using the forward matrix difference
$$
C^k-C^{k-1},
$$
while the second-difference quantity is computed using
$$
C^{k+1}-2C^k+C^{k-1},
$$
matching the centered second-difference diagnostic used in the implementation. These quantities are evaluated directly from the companion matrix and the training data, without diagonalizing $C$. The comparison provides a non-modal diagnostic for the finite-difference behavior induced by the recurrence.


\subsection{Comparative Diagnostics Across Systems}
The FitzHugh--Nagumo and SIR examples illustrate different behaviors of the positive-allocation companion predictor. The FitzHugh--Nagumo trajectory is recurrent and oscillatory, while the SIR trajectory is transient and dissipative. These differences are reflected in both the temporal modal amplitudes and the spectra of the associated companion matrices.

\begin{figure}[!htbp]
    \centering
    \includegraphics[width=\textwidth]{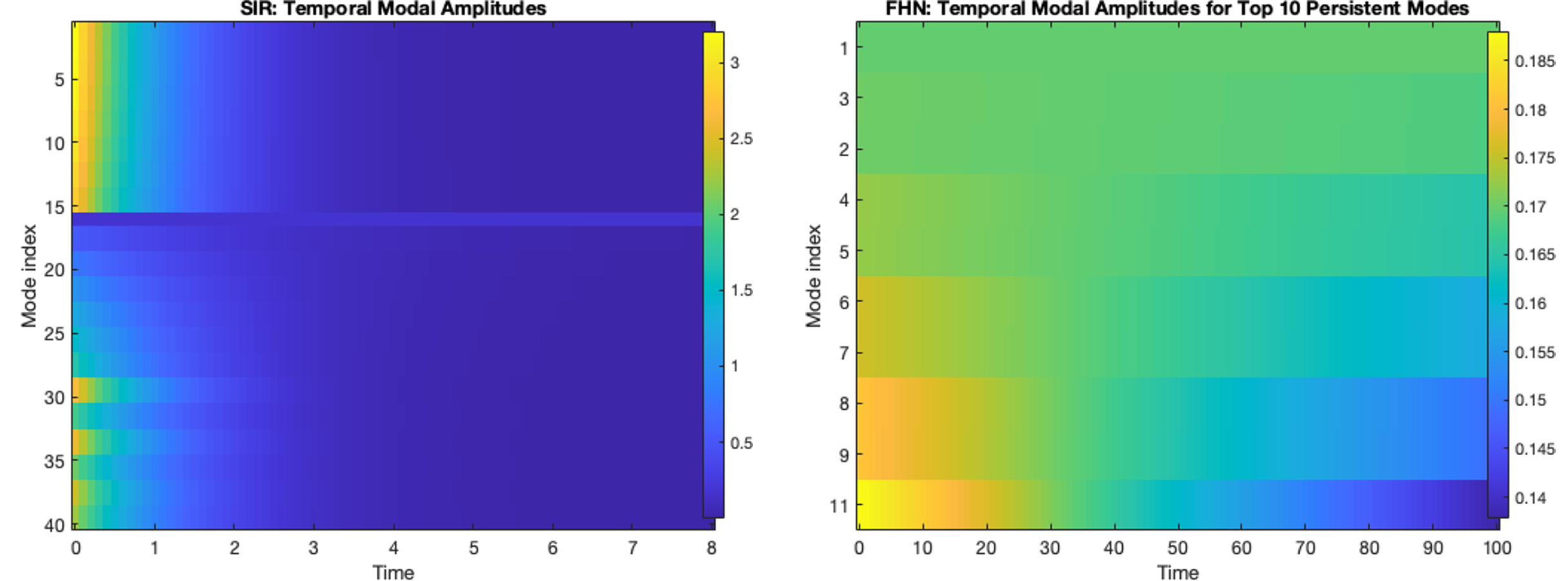}
    \caption{Temporal modal amplitudes for the SIR and FitzHugh--Nagumo companion predictors. (a) The SIR example on the left exhibits broader and more transient modal activity. (b) The FitzHugh--Nagumo example on the right shows persistent activity in a smaller collection of modes. These plots diagnose the companion-generated recurrence and should not be interpreted as direct measures of pointwise prediction accuracy.}
    \label{fig:modal-comparison}
\end{figure}

Figure~\ref{fig:modal-comparison} compares the temporal modal amplitudes for the two systems. For the FitzHugh--Nagumo example, a small collection of modes remains active over the prediction window, consistent with the persistence of oscillatory structure in the model trajectory. In contrast, the SIR example exhibits broader and more transient modal activity. Several modes contribute over different time segments, reflecting the nonrecurrent growth-and-decay structure of the epidemic trajectory.

This comparison should be interpreted as a diagnostic of the constructed recurrence rather than as a direct measure of prediction accuracy. In particular, the SIR example shows that modal or finite-difference structure may remain informative even when pointwise trajectory agreement is limited. The temporal modal amplitudes help identify which spectral components are active in the companion-generated trajectory, while the finite-difference diagnostics describe how the recurrence organizes rate-of-change and curvature-like behavior.

\begin{figure}[!htbp]
    \centering
    \includegraphics[width=\textwidth]{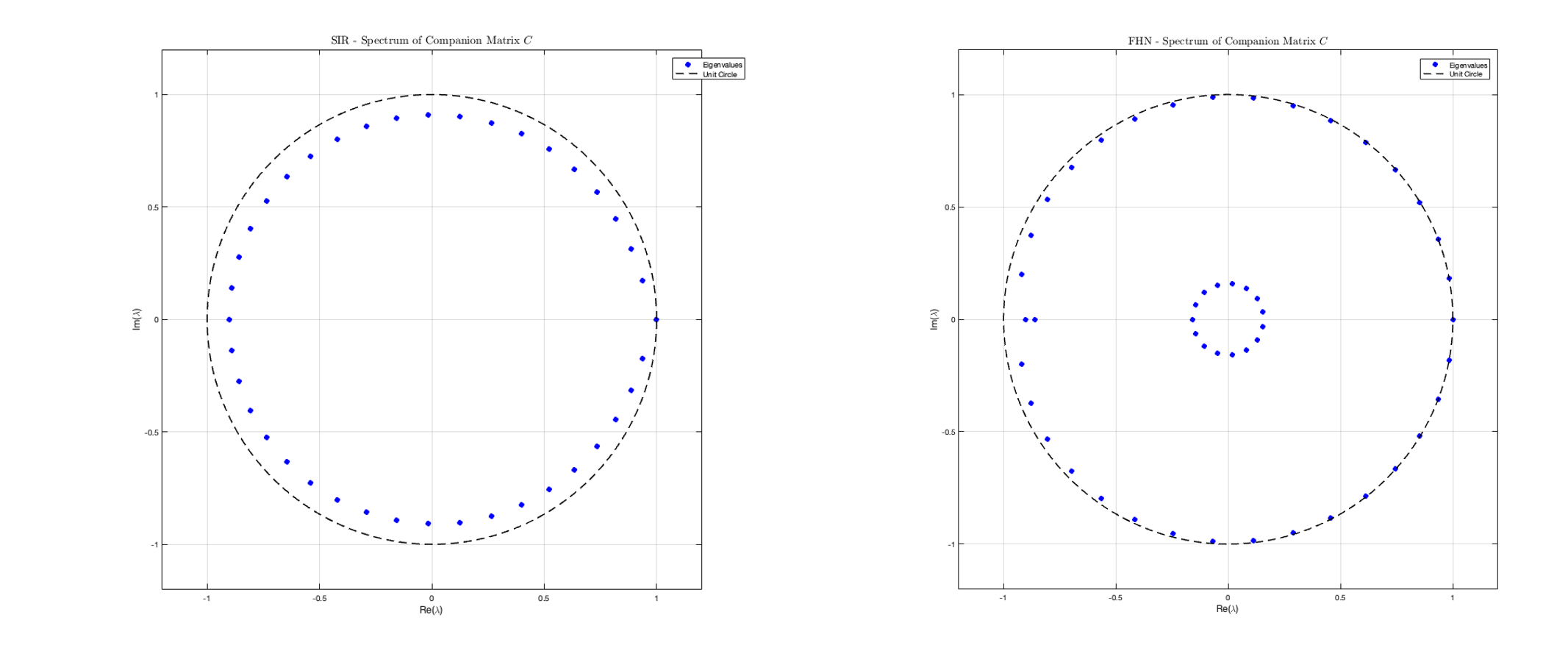}
    \caption{Spectra of the positive-allocation companion matrices for (a)  the SIR on the left and (b) the
    FitzHugh--Nagumo, on the right, examples.
    In both cases, normalized positive allocation places the spectrum in the closed unit disk and includes the eigenvalue $1$. The FitzHugh--Nagumo spectrum contains more near-unit oscillatory components, while the SIR spectrum contains more visibly decaying components, reflecting the different qualitative behavior of the two systems.}
    \label{fig:SpectCompare}
\end{figure}

The spectral plots in Figure~\ref{fig:SpectCompare} provide a complementary view. For normalized positive allocation, Proposition \ref{prop:positiveallocation} implies that all companion eigenvalues lie in the closed unit disk, and the coefficient normalization implies that $1$ is an eigenvalue. Thus, the distinction between the two examples is not whether the spectra lie inside or outside the unit disk, but how the non-unit spectral components are distributed. The FitzHugh--Nagumo predictor has many eigenvalues near the unit circle, which is consistent with slowly decaying oscillatory temporal factors. The SIR predictor has more visibly decaying components, consistent with transient dynamics and approach toward equilibrium.

Taken together, these diagnostics suggest that the positive-allocation companion predictor is most naturally suited to systems in which the final training snapshot can be meaningfully represented by earlier trajectory information. Recurrent systems such as FitzHugh--Nagumo are therefore favorable examples for this construction. Transient systems such as SIR are more challenging: the positional prediction may degrade, but the spectral, modal, and finite-difference diagnostics still provide useful information about the behavior of the constructed recurrence.


\section{Conclusion}
\label{sec:conclusions}

This work introduced a positive-allocation companion construction for Koopman-inspired finite-dimensional prediction of nonlinear dynamical systems. Rather than fitting a finite-dimensional linear operator and then examining its spectrum as a post-hoc diagnostic, the proposed construction determines recurrence coefficients through nonnegative, normalized allocation weights. These coefficients define a companion matrix whose spectral structure is shaped by the allocation constraints at the construction stage.

The main analytical consequence of this formulation is that normalized positive allocation produces a companion spectrum contained in the closed unit disk. Since the allocation coefficients sum to one, the associated characteristic polynomial also has $1$ as a root, so the companion predictor has spectral radius one in exact arithmetic. This spectral containment does not by itself guarantee accurate prediction of the underlying nonlinear dynamics, but it provides useful structural information about the internally generated model trajectory.

We also developed modal and non-modal diagnostics for the companion predictor. When the companion matrix is diagonalizable, the model trajectory admits a modal representation in which finite differences act as spectral filters through factors of $\lambda_\ell-1$ and $(\lambda_\ell-1)^2$. To avoid reliance on diagonalization, we also derived $C$-based finite-difference bounds that can be evaluated directly from the training data, the companion matrix, and the initial coordinate vector. These bounds and diagnostics describe properties of the constructed recurrence and should be distinguished from guarantees of pointwise accuracy with respect to the original nonlinear system.

The numerical examples illustrate both the strengths and limitations of the approach. For the FitzHugh--Nagumo system, the recurrent oscillatory structure is well matched to the positive-allocation construction, and the resulting companion predictor preserves important qualitative features of the oscillatory dynamics over the prediction window. For the SIR model, the transient and dissipative dynamics present a more challenging setting: pointwise trajectory agreement is limited over the full horizon, but the finite-difference and modal diagnostics still provide information about how the constructed recurrence organizes growth, decay, and curvature-like behavior.

These examples indicate that the method is most naturally suited to settings in which the final training snapshot can be meaningfully represented using earlier trajectory information. The quality of the resulting predictor depends on the observable selection, temporal resolution, scaling, training-window length, and numerical conditioning of the companion construction. Future work will investigate adaptive choices of observables and training windows, more systematic comparisons with DMD, EDMD, and kernel-based Koopman methods, and sharper error estimates relating the internal companion dynamics to the original nonlinear system.

\backmatter

\bmhead{Acknowledgements}

The authors would like to thank the anonymous reviewers of an earlier version of this manuscript for their helpful comments. Part of this research was performed while the authors were visiting the Institute for Pure and Applied Mathematics (IPAM), which is supported by the National Science Foundation (Grant No. DMS-1925919), for the Long Program on the Mathematics of Intelligence. The first author received support through the National Science Foundation PRIMES program (No. 2424984).

\section*{Declarations}
\textbf{Code availability. } The numerical examples in this paper use simulated data generated from the FitzHugh--Nagumo and SIR systems described in Section~\ref{sect:numerical}. Code used to generate the numerical results is available from the corresponding author upon reasonable request.

\bibliography{sn-bibliography}

\end{document}